\newlength{\lyxlabelwidth}      
  \theoremstyle{plain}
  \newtheorem*{thm*}{\protect\theoremname}
\theoremstyle{plain}
\newtheorem{thm}{\protect\theoremname}
  \theoremstyle{plain}
  \newtheorem{lem}[thm]{\protect\lemmaname}
\newenvironment{elabeling}[2][]%
{\settowidth{\lyxlabelwidth}{#2}
\begin{description}[font=\normalfont,style=sameline,
leftmargin=\lyxlabelwidth,#1]}
{\end{description}}
\date{}
  \providecommand{\lemmaname}{Lemma}
  \providecommand{\theoremname}{Theorem}
\providecommand{\theoremname}{Theorem}
\begin{document}
\global\long\def\bR{\mathbf{R}}
\global\long\def\Bal{\mathrm{Bal}}
\global\long\def\Dirac{\mathrm{Dirac}}
\global\long\def\bC{\mathbf{C}}
\global\long\def\bD{\mathbb{D}}
\global\long\def\sign{\mathrm{sign}}
\global\long\def\coni{C_{1}}
\global\long\def\conxvi{C_{2}}
\global\long\def\conxix{C_{3}}
\global\long\def\conxviii{C_{4}}
\global\long\def\conxxv{C_{5}}
\global\long\def\conxxvi{C_{6}}
\global\long\def\conxxvii{C_{7}}

\title{Some notes on $L^{p}$ Bernstein inequality when $0<p<1$ }

\author{Béla Nagy and Tamás Varga}
\maketitle
\begin{abstract}
Recently, Nagy-Toókos and Totik-Varga proved an asymptotically sharp
$L^{p}$ Bernstein type inequality on union of finitely many intervals.
We extend this inequality to the case when the power $p$ is between
$0$ and $1$; such sharp Bernstein type inequality was proved first
by Arestov. 

Classification: 41A17, 41A44

Keywords: polynomial inequalities, Bernstein inequality, potential
theory
\end{abstract}

\section{Introduction and new results}

Bernstein inequality from approximation theory is well known. In the
last decades it was generalized to arbitrary compact subsets of the
real line using potential theoretical quantities. For potential theory,
we refer to the books \cite{MR1334766} and \cite{MR1485778}. This
general form of Bernstein inequality states that if $K\subset\mathbf{R}$
is a compact set, $x\in K$ interior point, $P$ is an algebraic polynomial,
then 
\begin{equation}
\left|P'\left(x\right)\right|\le\deg\left(P\right)\pi\omega_{K}\left(x\right)\left\Vert P\right\Vert _{K}\label{ineq:bt}
\end{equation}
where $\omega_{K}\left(x\right)$ is the density of the equilibrium
measure. Inequality \eqref{ineq:bt} was proved independently by Baran
\cite{MR1160251} and Totik \cite{MR1864632}.

Recently, this inequality was generalized to $L^{p}$ norms for algebraic
polynomials and trigonometric polynomials, see \cite{nagytookos}
and \cite{totikvarga}. We need the following notation for $E\subset\left[0,2\pi\right):$
\[
\Gamma_{E}:=\left\{ e^{it}:\ t\in E\right\} .
\]
The following is Theorem 1.1 in \cite{totikvarga}.
\begin{thm*}
Let $1\le p<\infty$ and $E\subset[0,2\pi)$ be a compact set consisting
of finitely many intervals. Denote the density of the equilibrium
measure of $\Gamma_{E}$ by $\omega_{\Gamma_{E}}\left(e^{it}\right)$.
Then, for any trigonometric polynomial $T_{n}$ of degree $n$, we
have
\begin{equation}
\int_{E}\left|\frac{T_{n}'\left(t\right)}{n\,2\pi\,\omega_{\Gamma_{E}}\left(e^{it}\right)}\right|^{p}\omega_{\Gamma_{E}}\left(e^{it}\right)dt\le\left(1+o\left(1\right)\right)\int_{E}\left|T_{n}\left(t\right)\right|^{p}\omega_{\Gamma_{E}}\left(e^{it}\right)dt\label{ineq:lpbern}
\end{equation}
where $o\left(1\right)$ tends to $0$ uniformly in $T_{n}$ as $n\rightarrow\infty$.
\end{thm*}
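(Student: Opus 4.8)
The plan is to derive the inequality from the single elementary instance --- the full unit circle --- by two transference steps: first to preimages of $[-1,1]$ under real trigonometric polynomials (``$T$-sets''), where the equilibrium measure can be written down explicitly, and then to an arbitrary finite union of intervals by a potential-theoretic approximation argument. Since $2\pi\omega_{\Gamma_{E}}$ is the normal derivative of the Green's function of $\overline{\mathbf{C}}\setminus\Gamma_{E}$ with pole at $\infty$, it transforms covariantly under the maps below, and it is precisely this covariance that makes the constant asymptotically sharp.

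\textbf{Step 1 (the base cases).} When $E=[0,2\pi)$ the set $\Gamma_{E}$ is the whole circle, $2\pi\omega_{\Gamma_{E}}\equiv1$, and the assertion is the exact $L^{p}$ Bernstein inequality $\|T_{n}'\|_{p}\le n\|T_{n}\|_{p}$ on the circle, which is classical for $p\ge1$. The substitution $x=\cos\theta$ turns this into the exact weighted inequality $\int_{-1}^{1}\bigl|\frac{P'(x)}{n\pi\omega_{[-1,1]}(x)}\bigr|^{p}\omega_{[-1,1]}(x)\,dx\le\int_{-1}^{1}|P(x)|^{p}\omega_{[-1,1]}(x)\,dx$ for algebraic polynomials $P$ of degree $n$, where $\omega_{[-1,1]}(x)=1/(\pi\sqrt{1-x^{2}})$; both base inequalities hold with constant $1$ and no error term.

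\textbf{Step 2 (reduction to $T$-sets).} Let $\tau$ be a real trigonometric polynomial of degree $N$ for which $E^{*}:=\tau^{-1}\bigl([-1,1]\bigr)$ is a finite union of intervals; then $\omega_{\Gamma_{E^{*}}}(e^{it})$ is an explicit constant multiple of $|\tau'(t)|\,\omega_{[-1,1]}(\tau(t))$ --- the pull-back of the arcsine density by $\tau$. If $T_{n}=P\circ\tau$ with $P$ algebraic of degree $n/N$, then $T_{n}'=(P'\circ\tau)\,\tau'$, and the change of variables $s=\tau(t)$ on each monotone branch of $\tau$ makes \emph{both} integrals in the assertion collapse exactly onto the corresponding integrals for $P$ on $[-1,1]$ from Step~1, so there the constant is again $1$. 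A general trigonometric polynomial of degree $n$ is not of the form $P\circ\tau$, and closing that gap is the technical heart: one localises $T_{n}$ with fast-decreasing polynomials of degree $o(n)$ adapted to the individual branches of $\tau$, on each branch approximates the (analytic, but only $\sqrt{\cdot}$-smooth at the turning points of $\tau$) transplant $s\mapsto T_{n}(\tau^{-1}(s))$ by an algebraic polynomial of degree $n/N+o(n)$, applies the base estimate, and recombines; because $p\ge1$, the triangle inequality for $\|\cdot\|_{L^{p}(\omega_{[-1,1]})}$ keeps all of these errors multiplicative and uniform in $T_{n}$. This yields the asserted inequality on $\Gamma_{E^{*}}$ with constant $1+o(1)$ --- already a nontrivial statement for a single circular arc.

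\textbf{Step 3 (general $E$ and the main obstacle).} For an arbitrary finite union of intervals $E$, choose $T$-sets $E^{*}_{k}$ with $E\subseteq E^{*}_{k}$ (or $E^{*}_{k}\subseteq E$) and $E^{*}_{k}\to E$; by the known approximation of finite unions of intervals by polynomial inverse images, together with the continuity of equilibrium densities under such approximations, $\omega_{\Gamma_{E^{*}_{k}}}\to\omega_{\Gamma_{E}}$ on $E$ and the two integral functionals for $E^{*}_{k}$ converge to those for $E$, so the Step~2 inequality passes to the limit with the same asymptotic constant. The step I expect to dominate the work is the branch-wise polynomial approximation inside Step~2: one must show that converting an \emph{arbitrary} degree-$n$ trigonometric polynomial into honest algebraic polynomials of degree $n/N+o(n)$, branch by branch, costs only a factor $1+o(1)$ in \emph{every} $L^{p}(\omega_{[-1,1]})$ norm simultaneously and uniformly over all such $T_{n}$ --- which forces the windowing polynomials to have degree $o(n)$ together with sharp $L^{p}$ (not merely $L^{\infty}$) control, and to be compatible with the arcsine-type singularity of $\omega_{[-1,1]}(\tau(\cdot))$ at the turning points of $\tau$ and at the endpoints of the intervals of $E$. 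This is also one of the places where the $p\ge1$ structure (convexity of $s\mapsto|s|^{p}$) is used, and hence a point that must be reworked in the companion result for $0<p<1$.
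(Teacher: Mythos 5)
Your outline follows essentially the same route as the paper (and the Totik--Varga proof it recounts in Section 2): establish the inequality first on T-sets $E=U_{N}^{-1}[-1,1]$ for polynomials of $U_{N}$ via the classical $L^{p}$ Bernstein/Zygmund inequality together with the explicit formula $\omega_{\Gamma_{E}}\left(e^{it}\right)=\frac{1}{2\pi N}\frac{\left|U_{N}'(t)\right|}{\sqrt{1-U_{N}(t)^{2}}}$, then pass to arbitrary $T_{n}$ by localizing with fast-decreasing polynomials $q$ of degree $o(n)$, and finally treat a general finite union of intervals by approximation with polynomial inverse images. The only notable (and minor) divergence is in step (b), where the paper works with the symmetrization $T_{n}^{*}\left(t\right)=\sum_{h}T_{n}\left(t_{h}\right)q\left(t_{h}\right)$, which is exactly a polynomial of $U_{N}$, instead of your branchwise polynomial approximation of the transplanted function; the architecture and the way the errors are kept multiplicative and uniform in $T_{n}$ are the same.
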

In this paper we extend inequality \eqref{ineq:lpbern} to the case
$0<p<1$ (Arestov case) and show that the result is asymptotically
sharp. That is, we are going to prove the following two theorems.
\begin{thm}
\label{thm:arestov}Let $0<p<1$ be arbitrary. Let $E\subset[0,2\pi)$
be a compact set consisting of finitely many intervals. Then, for
any trigonometric polynomial $T_{n}$ of degree $n$, we have
\begin{equation}
\int_{E}\left|\frac{T_{n}'\left(t\right)}{n\,2\pi\,\omega_{\Gamma_{E}}\left(e^{it}\right)}\right|^{p}\omega_{\Gamma_{E}}\left(e^{it}\right)dt\le\left(1+o\left(1\right)\right)\int_{E}\left|T_{n}\left(t\right)\right|^{p}\omega_{\Gamma_{E}}\left(e^{it}\right)dt\label{ineq:lpbernare}
\end{equation}
where $o\left(1\right)$ tends to $0$ uniformly in $T_{n}$ as $n\rightarrow\infty$.
\end{thm}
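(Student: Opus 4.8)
The plan is to run the scheme by which the range $1\le p<\infty$ is treated in \cite{totikvarga} (and \cite{nagytookos}), and to isolate the one point where $p\ge 1$ enters: an application of the classical $L^{p}$ Bernstein inequality on the unit circle, which for $p\ge 1$ is a theorem of M.~Riesz. I would replace that ingredient by Arestov's theorem, which states that $\int_{0}^{2\pi}\left|T_{n}'(t)\right|^{p}\,dt\le n^{p}\int_{0}^{2\pi}\left|T_{n}(t)\right|^{p}\,dt$ for \emph{every} $p>0$ and every trigonometric polynomial $T_{n}$ of degree $n$. Since $\omega_{\Gamma_{[0,2\pi)}}\equiv 1/(2\pi)$, this is exactly \eqref{ineq:lpbernare} with no error term in the model case $E=[0,2\pi)$, and it serves as the base estimate. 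The other task is to check that the reduction of general $E$ to this model case never uses the triangle inequality in $L^{p}$ (which fails for $0<p<1$): the only inequality between integrals of $p$-th powers that is actually needed is the subadditivity $(a+b)^{p}\le a^{p}+b^{p}$, valid precisely for $0<p\le 1$, which makes the assembly step below, if anything, cleaner than for $p\ge 1$.

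Next I would localize. Fix $\varepsilon>0$ and split $E$ into finitely many arcs $E=\bigcup_{j}E_{j}$ of length at most $\delta=\delta(\varepsilon)$. Away from the finitely many endpoints of $E$ the density $\omega_{\Gamma_{E}}$ is continuous and bounded below, and near an endpoint it has the standard inverse-square-root behaviour; hence each $\Gamma_{E_{j}}$ can be enclosed in a single circular arc $\Gamma_{A_{j}}$ on which $(1-\varepsilon)\,\omega_{\Gamma_{A_{j}}}\le\omega_{\Gamma_{E}}\le(1+\varepsilon)\,\omega_{\Gamma_{A_{j}}}$ on $\Gamma_{E_{j}}$, where $\omega_{\Gamma_{A_{j}}}$ is the explicit equilibrium density of an arc. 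A conformal automorphism of the disk carrying $\Gamma_{A_{j}}$ to a symmetric arc, followed by the standard substitution turning the equilibrium measure of an arc into that of the full circle, transports the wanted local estimate for $T_{n}$ on $E_{j}$ into a Bernstein-type inequality for a trigonometric polynomial of comparable degree on a subset of $[0,2\pi)$ with the constant weight, which is covered by Arestov's theorem. Because the derivative is non-local, the localization has to be implemented by multiplying $T_{n}$ by a fast-decreasing polynomial $S_{j}$ of degree $o(n)$ that is $\approx 1$ on $\Gamma_{E_{j}}$ and exponentially small on the rest of $\Gamma_{E}$, applying the model inequality to $S_{j}T_{n}$, and then stripping $S_{j}$ off; the cross term $S_{j}'T_{n}$ and the leftover mass of $S_{j}T_{n}$ over $\Gamma_{E}\setminus\Gamma_{E_{j}}$ are pushed into the $o(1)$ via Nikolskii- and Remez-type bounds, all of which hold for every $p>0$.

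Finally I would reassemble: adding the local inequalities and using $\int_{E}|f|^{p}=\sum_{j}\int_{E_{j}}|f|^{p}$ together with the comparison between $\omega_{\Gamma_{E}}$ and the various $\omega_{\Gamma_{A_{j}}}$ yields \eqref{ineq:lpbernare} with $(1+o(1))$ replaced by $(1+C\varepsilon+o(1))$; letting $n\to\infty$ and then $\varepsilon\to 0$ gives the theorem, uniformity in $T_{n}$ being inherited from the fact that Arestov's constant is exactly $1$ and every other estimate used is uniform in $T_{n}$. I expect the main obstacle to be precisely this localization step: one must select the multipliers $S_{j}$ and control the error integrals $\int_{\Gamma_{E}\setminus\Gamma_{E_{j}}}|S_{j}T_{n}|^{p}$ and $\int_{E_{j}}|S_{j}'T_{n}|^{p}$ with no access to Hölder duality, so everything must be phrased through pointwise polynomial inequalities and the subadditivity of $t\mapsto t^{p}$. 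A cleaner alternative, which avoids the multipliers, would be to replace $\Gamma_{E}$ by a polynomial inverse image of an arc sandwiching $\Gamma_{E}$ up to measure $\varepsilon$, pull the circle inequality back through the defining polynomial map by the chain rule (the equilibrium measure transforms canonically under such maps), and pass to the limit — at the price of estimating $\int_{E^{*}\setminus E}|T_{n}|^{p}$ by a Remez-type inequality valid for $0<p<1$.
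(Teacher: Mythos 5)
You correctly identify the two ingredients that make the $0<p<1$ case work: Arestov's inequality in place of the classical ($p\ge1$) $L^{p}$ Bernstein inequality as the model-case estimate, and the subadditivity $|a+b|^{p}\le|a|^{p}+|b|^{p}$ (together with $\bigl||a|^{p}-|b|^{p}\bigr|\le|a-b|^{p}$) in place of Minkowski; this is indeed what the paper does. But your reduction to the model case has a genuine gap. Arestov's theorem is a \emph{global} inequality over the whole circle; after your conformal map and substitution the image of $E_{j}$ is only a proper subset of $[0,2\pi)$, and there is no ``Bernstein-type inequality on a subset of $[0,2\pi)$ with the constant weight'' to invoke: for a proper subset the asymptotically sharp comparison factor is not $n$ but $n\,2\pi\,\omega$ of that subset, which is exactly the statement being proved. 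Moreover the two-sided comparison $(1-\varepsilon)\,\omega_{\Gamma_{A_{j}}}\le\omega_{\Gamma_{E}}\le(1+\varepsilon)\,\omega_{\Gamma_{A_{j}}}$ with a single arc is unjustified ($\omega_{\Gamma_{E}}$ is a global quantity, e.g.\ its inverse-square-root blow-up constants at the endpoints depend on all of $E$), and even where such a comparison holds it does not convert the local weighted problem into a global unweighted one, so the multiplier trick with $S_{j}$ has nothing to land on.

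The mechanism that closes this gap in the paper (following \cite{totikvarga} and \cite{nagytookos}), and which your sketch lacks, is the polynomial inverse image structure. One first works on a T-set $E=U_{N}^{-1}[-1,1]$ and, for an \emph{arbitrary} $T_{n}$, forms the symmetrized polynomial $T_{n}^{*}(t)=\sum_{h=1}^{2N}T_{n}(t_{h})q(t_{h})$ with a fast-decreasing multiplier $q$ of degree $O(n^{2\theta})$; $T_{n}^{*}$ is genuinely a polynomial of $U_{N}$, so the substitution through $U_{N}$ turns integrals over all of $E$ into global integrals where Arestov's inequality applies, and Lemmas \ref{lem:7} and \ref{lem:8} show that $A(T_{n}^{*},E)$ and $B(T_{n}^{*},E)$ reproduce $2N\,A(T_{n},H)$ and $2N\,B(T_{n},H)$ on small intervals $H$ up to error terms controlled by $F_{n}^{p}$, $a(T_{n},H_{b})$, $b(T_{n},H_{b})$ and $(n^{2\theta}/n)^{p}$. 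Your ``cleaner alternative'' (sandwiching by a polynomial inverse image and pulling the circle inequality back by the chain rule) fails for the same reason: the chain-rule pull-back only works when $T_{n}$ is a polynomial of the defining map, which is precisely what the symmetrization is there to arrange. You would also need the two further devices the paper uses: for intervals of the second type (containing an inner extremal point of $U_{N}$) the deformed T-sets $E(\delta)$ and the density comparison \eqref{w/w} of Lemma \ref{sulykonv}, and, specific to $p<1$, the extra assumption \eqref{eq:uj_felt} together with the Nikolskii-type lower bound of Lemma \ref{also} (giving $F_{n}^{p/2}\le B(T_{n},E)$) to keep all error terms of the form $o(1)\bigl(A(T_{n},E)+B(T_{n},E)\bigr)$ uniformly in $T_{n}$; uniformity does not follow merely from Arestov's constant being $1$.
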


\begin{thm}
\label{thm:sharpness}Let $0<p<1$ be arbitrary. Let $E\subset[0,2\pi)$
be a compact set consisting of finitely many intervals. Then, there
exist trigonometric polynomials $T_{n}$ such that
\[
\int_{E}\left|\frac{T_{n}'\left(t\right)}{n\,2\pi\,\omega_{\Gamma_{E}}\left(e^{it}\right)}\right|^{p}\omega_{\Gamma_{E}}\left(e^{it}\right)dt\ge\left(1-o\left(1\right)\right)\int_{E}\left|T_{n}\left(t\right)\right|^{p}\omega_{\Gamma_{E}}\left(e^{it}\right)dt
\]
where $o\left(1\right)$ tends to $0$ as $n\rightarrow\infty$.
\end{thm}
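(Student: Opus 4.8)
The strategy is to produce an asymptotically extremal family $T_n$ explicitly and to observe that, once it is in hand, the evaluation of both sides is insensitive to the value of $p\in(0,\infty)$; in particular the family underlying the sharpness of the Totik--Varga theorem will do. The guiding model is $E=[0,2\pi)$, where $\Gamma_E$ is the whole unit circle, $\omega_{\Gamma_E}\equiv1/2\pi$, and $T_n(t)=\cos(nt)$ gives \emph{exact} equality for every $p>0$, since $\int_0^{2\pi}|\sin nt|^p\,dt=\int_0^{2\pi}|\cos nt|^p\,dt$. For a general $E$ one transplants this by potential theory (via the complex Green's function of $\bC\setminus\Gamma_E$, or Totik's polynomial inverse images, exactly as in the constructions behind \eqref{ineq:lpbern} in \cite{totikvarga} and \cite{nagytookos}): there are trigonometric polynomials $T_n$ of degree $n$ with $\|T_n\|_E\le C$, and for each $n$ a finite union $E_n$ of closed subarcs of the interior of $E$ with $|E\setminus E_n|\to0$, together with $C^1$ functions $\Theta_n$ on $E_n$ that are strictly increasing on each subarc with $\min_{E_n}\Theta_n'\to\infty$, such that, uniformly on $E_n$,
\[
T_n(t)=\cos\Theta_n(t)+o(1),\qquad \frac{T_n'(t)}{n\,2\pi\,\omega_{\Gamma_E}(e^{it})}=-\sin\Theta_n(t)+o(1),
\]
the local ``frequency'' $\Theta_n'(t)$ being $(1+o(1))\,n\,2\pi\,\omega_{\Gamma_E}(e^{it})$. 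Constructing such $T_n$ and $\Theta_n$ with uniform control of the error terms, and in particular handling the endpoints of the arcs of $E$ where $\omega_{\Gamma_E}$ is singular, is the technical heart; it is $p$-independent and is the only place potential theory is used, so I would invoke it from the cited works rather than redo it.

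Granting this, raise the two displayed relations to the $p$-th power. Since $x\mapsto|x|^p$ is uniformly continuous on bounded intervals and the right-hand sides are bounded, uniformly on $E_n$
\[
\Bigl|\tfrac{T_n'(t)}{n\,2\pi\,\omega_{\Gamma_E}(e^{it})}\Bigr|^p=|\sin\Theta_n(t)|^p+o(1),\qquad |T_n(t)|^p=|\cos\Theta_n(t)|^p+o(1).
\]
Now apply the elementary equidistribution fact: if $\Theta_n$ is strictly increasing and $C^1$ on an interval with $\Theta_n'\to\infty$, $F$ is $2\pi$-periodic and continuous, and $w$ is bounded and continuous, then $\int F(\Theta_n(t))\,w(t)\,dt=(1+o(1))\,\tfrac1{2\pi}\bigl(\int_0^{2\pi}F\bigr)\int w(t)\,dt$ (change variables $u=\Theta_n(t)$ and use that the density $w(\Theta_n^{-1}(u))/\Theta_n'(\Theta_n^{-1}(u))$ varies negligibly over each $u$-period). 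Applying this on each subarc of $E_n$ with $w=\omega_{\Gamma_E}(e^{i\cdot})$ (bounded and continuous there) and $F=|\sin|^p$, resp. $F=|\cos|^p$, and using $\int_0^{2\pi}|\sin u|^p\,du=\int_0^{2\pi}|\cos u|^p\,du=:2\pi c_p>0$, one obtains that both $\int_{E_n}|T_n'(t)/(n\,2\pi\,\omega_{\Gamma_E}(e^{it}))|^p\,\omega_{\Gamma_E}(e^{it})\,dt$ and $\int_{E_n}|T_n(t)|^p\,\omega_{\Gamma_E}(e^{it})\,dt$ equal $(1+o(1))\,c_p\int_{E_n}\omega_{\Gamma_E}(e^{it})\,dt$.

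It remains to see that $E\setminus E_n$ is negligible. By the pointwise Bernstein inequality for trigonometric polynomials on $E$ (the analogue of \eqref{ineq:bt}, with an absolute constant if necessary), $|T_n'(t)|\le Cn\,\omega_{\Gamma_E}(e^{it})\|T_n\|_E$ for $t$ in the interior of $E$, so $|T_n'(t)/(n\,2\pi\,\omega_{\Gamma_E}(e^{it}))|\le C'$ a.e.\ on $E$; together with $\|T_n\|_E\le C$ this bounds both integrands above a.e.\ on $E$ by a constant independent of $n$. Since $\omega_{\Gamma_E}(e^{it})$ is integrable on $E$ with $\int_E\omega_{\Gamma_E}(e^{it})\,dt=1$ and $|E\setminus E_n|\to0$, absolute continuity of the integral gives $\int_{E\setminus E_n}\omega_{\Gamma_E}(e^{it})\,dt\to0$, hence the contributions of $E\setminus E_n$ to the integrals over $E$ tend to $0$ and $\int_{E_n}\omega_{\Gamma_E}(e^{it})\,dt\to1$. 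Combining the estimates, both $\int_E|T_n'/(n\,2\pi\,\omega_{\Gamma_E})|^p\,\omega_{\Gamma_E}$ and $\int_E|T_n|^p\,\omega_{\Gamma_E}$ converge to $c_p$, so their ratio tends to $1$, which is stronger than the claimed lower bound $1-o(1)$.

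The main obstacle is the construction summarised in the first paragraph; everything after it is $p$-robust. In fact the range $0<p<1$ causes no extra difficulty here at all: $|T_n|^p\le\|T_n\|_E^p$ keeps every integral finite, and the only properties of $p$ actually used are $p>0$ and the identity $\int_0^{2\pi}|\sin|^p=\int_0^{2\pi}|\cos|^p$.
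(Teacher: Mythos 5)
Your proposal is correct and takes essentially the same route as the paper: the paper's own proof of Theorem \ref{thm:sharpness} simply invokes the near-extremal cosine-type construction of Section 7 in \cite{nagytookos} (transferred to the trigonometric setting by the cosine substitution), which is exactly the construction you black-box. As you observe, the only role of $p$ in that sharpness argument is through $\int_0^{2\pi}\left|\sin u\right|^{p}du=\int_0^{2\pi}\left|\cos u\right|^{p}du>0$, so nothing changes for $0<p<1$.
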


\section{Approach of Theorem \ref{thm:arestov} \label{sec:arestov-case}}

The proof of Theorem \ref{thm:arestov} follows the same idea as that
of \eqref{ineq:lpbern}. First, we introduce notations and cite results
from the papers \cite{nagytookos} and \cite{totikvarga}. Then we
discuss the three lemmas which have to be modified and prove them.

We say $E$ is a T-set of order $N$ if there is a real trigonometric
polynomial $U_{N}$ of degree $N$ such that $U_{N}\left(t\right)$
runs through $\left[-1,1\right]$ $2N$-times as $t$ runs through
$E$, see \cite{totikvarga}, p. 404.

The idea of the proof of \eqref{ineq:lpbern} consists of three major
steps:
\begin{description}
\item [{(a)}] To prove it when $E$ is a so-called T-set associated of
the trigonometric polynomial $U_{N}$, $E=U_{N}^{-1}\left[-1,1\right]$
and $T_{n}$ is a polynomial of $U_{N}$;
\item [{(b)}] to prove it when $E$ is a T-set and $T_{n}$ is an arbitrary
polynomial;
\item [{(c)}] to prove it when both the finite interval-system $E$ and
the trigonometric polynomial $T_{n}$ are arbitrary.
\end{description}
To verify (a), it is enough to use Arestov inequality for trigonometric
polynomials, see \cite{MR1261635}, Section 4.3 or \cite{MR607574}
instead of Zygmund inequality, all the other parts of that proof come
through.

As for part (c), the proof from \cite{totikvarga} can be applied
here since they did not use the fact that $1\le p<\infty$.

As for part (b), most of the ideas from \cite{totikvarga} (and \cite{nagytookos})
applies here as well, except for Lemma 3.3 and Proposition 3.5 from
\cite{totikvarga}. To adopt the latters to our case, we need to recall
some notations and some details of that approach.

\bigskip{}

We split the set $E$ as follows (see Section 3 in \cite{totikvarga}).
Let 
\[
E=\cup_{l=1}^{m}\left[v_{2l-1},v_{2l}\right]=\cup_{l=1}^{m}\cup_{h=1}^{r_{l}}\left[\zeta_{l,h-1},\zeta_{l,h}\right]=\cup_{j=1}^{2N}B_{j}
\]
where $\left[v_{2l-1},v_{2l}\right]$, $l=1,2,\dots,m,$ denote the
components of $E$ and $\left[\zeta_{l,h-1},\zeta_{l,h}\right]=B_{r_{1}+\dots+r_{l-1}+h}$,
$l=1,2,\dots,m$, $h=h_{l}=1,2,\dots,r_{l},$ denote the branches
of $E$. Recall that a subset of $E$ is called branch if $U_{N}$
is strictly monotone on this subset, and $U_{N}\left(t\right)$ runs
through $\left[-1,1\right]$ precisely once as $t$ runs through this
subset. If two branches, $B_{j}$ and $B_{j+1}$ has a common point,
say $\zeta$, then necessarily $\left|U_{N}\left(\zeta\right)\right|=1$
and $U_{N}'\left(\zeta\right)=0$ and we say that $\zeta$ is an inner
extremal point.

We fix $\gamma,\kappa,\text{\ensuremath{\theta}}$  such that 
\[
1/2>\theta>4\kappa
\]
and 
\[
0<\gamma<\frac{\kappa}{2}.
\]
There will be one more assumption. We divide $E$ into intervals $I_{j}$
of length between $1/2n^{\kappa}$ and $1/n^{\kappa}$ as in \cite{totikvarga},
Subsection 3.1. We call such $I_{j}$ a small interval. $J_{n}$ denotes
the set of indices of the small intervals. Let $J\subset J_{n}$ be
arbitrary. Then $H=H(J)$ denotes the union of $\left\{ I_{j}\right\} _{j\in J}$
and $H_{b}$ denotes the union of the bordering small intervals $I_{j}$,
for precise definition we refer to Subsection 3.1 in \cite{totikvarga}.
We need the following notations:
\begin{gather*}
A\left(T_{n,}X\right):=\int_{X}\left|\frac{T_{n}'(t)}{n2\pi\omega_{\Gamma_{E}}\left(e^{it}\right)}\right|^{p}\omega_{\Gamma_{E}}\left(e^{it}\right)\mathrm{d}t,\\
B\left(T_{n},X\right):=\int_{X}\left|T_{n}(t)\right|^{p}\omega_{\Gamma_{E}}\left(e^{it}\right)\mathrm{d}t,\\
a\left(T_{n},X\right):=\frac{A\left(T_{n},X\right)}{A\left(T_{n},E\right)},
\end{gather*}
and

\[
b\left(T_{n},X\right):=\frac{B\left(T_{n},X\right)}{B\left(T_{n},E\right)}.
\]
With these notations a set $H=H\left(J\right)$ can possess the following
properties:
\begin{equation}
\left(H\cup H_{b}\right)\cap E\subset\left[\zeta_{l,h},\zeta_{l,h+1}\right]\tag{I}\label{eq:I}
\end{equation}
for some $l$ and $h$;
\begin{equation}
a\left(T_{n},H_{b}\right)\le n^{-\gamma}\tag{II-a},\label{eq:II-a}
\end{equation}
\begin{equation}
b\left(T_{n},H_{b}\right)\le n^{-\gamma}\tag{II-b},\label{eq:II-b}
\end{equation}
and

\begin{equation}
\left|H(J)\right|\le4n^{\gamma-\kappa}=o(1)\tag{III}.\label{eq:III}
\end{equation}
With this splitting at hand we use another division of $E$. An interval
$H=H\left(J\right)\subset E$ can be of first, second or third type.
The union of them covers $E$ except at most $4N$ small intervals.
For the definition, we refer to \cite{totikvarga} Subsection 3.2
(Three types of subinterval; see also \cite{nagytookos} Subsection
5.2). What is important for us is that an interval $H$ of the first
or third type possesses the properties \eqref{eq:I}, \eqref{eq:II-a}
and \eqref{eq:II-b}, while an interval $H$ of the second type has
the properties \eqref{eq:II-a}, \eqref{eq:II-b} and \eqref{eq:III}
and there is exactly one $l\in\left\{ 1,2,\dots,m\right\} $ and exactly
one $h=h_{l}\in\left\{ 1,2,\dots,r_{l}-1\right\} $ such that $\zeta_{l,h_{l}}\in H,$
so an interval of the second type with its bordering small intervals
has intersections of positive Lebesgue measures with two branches
of $E$. 

First we cite Lukashov's result \cite{MR2069196}, which extends the
trigonometric form of Bernstein-Szegő's inequality from an interval
to an arbitrary compact subset of $(-\pi,\pi]$. Actually, we use
this following, special case only.
\begin{lem}
Let $E\subset\left(-\pi,\pi\right]$ consist of finitely many intervals.
If $e^{it}$ is an inner point of $\Gamma_{E}$, then for any trigonometric
polynomial $T_{n}$ of degree at most $n=1,2,\ldots$ we have
\begin{equation}
\left|T_{n}'\left(t\right)\right|\le n\,2\pi\,\omega_{\Gamma_{E}}\left(e^{it}\right)\left\Vert T_{n}\right\Vert _{E}.\label{Luk}
\end{equation}

\end{lem}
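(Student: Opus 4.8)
The statement is Lukashov's trigonometric Bernstein--Szeg\H{o} inequality on systems of arcs \cite{MR2069196}, specialized from trigonometric rational functions to trigonometric polynomials; I would prove it along the following lines. First, reduce to \emph{real} trigonometric polynomials: for a complex $T_{n}$ and a fixed inner point $e^{it_{0}}$ of $\Gamma_E$, choose $\alpha\in\bR$ with $\mathrm{Re}\bigl(e^{i\alpha}T_{n}'(t_{0})\bigr)=|T_{n}'(t_{0})|$; then $S_{n}:=\mathrm{Re}(e^{i\alpha}T_{n})$ is a real trigonometric polynomial of degree $\le n$ with $|S_{n}|\le|T_{n}|$ on $\bR$, so $\|S_{n}\|_{E}\le\|T_{n}\|_{E}$, while $S_{n}'(t_{0})=|T_{n}'(t_{0})|$; hence it suffices to treat real $T_{n}$. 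Next pass to the unit circle: with $z=e^{it}$ set $R(z):=z^{n}T_{n}(t)$, an algebraic polynomial of degree $\le 2n$ with $|R(z)|=|T_{n}(t)|$ on $\partial\bD$, so $\|R\|_{\Gamma_{E}}=\|T_{n}\|_{E}$; moreover $R$ is conjugate-reciprocal, $R(z)=z^{2n}\overline{R(1/\bar z)}$ (because $T_{n}$ is real), and the elementary identity $R'(e^{it})=e^{i(n-1)t}\bigl(nT_{n}(t)-iT_{n}'(t)\bigr)$ yields $|T_{n}'(t)|\le|R'(e^{it})|$. Thus the lemma reduces to the sharp algebraic Bernstein--Szeg\H{o} estimate
\[
|R'(e^{it_{0}})|\le n\cdot 2\pi\,\omega_{\Gamma_{E}}(e^{it_{0}})\,\|R\|_{\Gamma_{E}}
\]
for conjugate-reciprocal polynomials $R$ of degree $\le 2n$ at inner points $e^{it_{0}}$ of $\Gamma_{E}$; note that the factor $n$ rather than $2n$ in front is exactly what the conjugate-reciprocity of $R$, together with the symmetry of $\Gamma_{E}$ under the inversion $z\mapsto1/\bar z$, should buy back.

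This last estimate I would obtain in three steps, following the usual polynomial-inverse-image scheme. (i) Single interval $E=[-\beta,\beta]$: here it is Videnskii's inequality, and one checks directly that its constant $\cos(t_{0}/2)/\sqrt{\sin^{2}(\beta/2)-\sin^{2}(t_{0}/2)}$ equals $2\pi\,\omega_{\Gamma_{[-\beta,\beta]}}(e^{it_{0}})$. (ii) T-sets $E=U_{N}^{-1}[-1,1]$: pull back through $U_{N}$ to reduce to the interval case, using the equilibrium-density formula $\omega_{\Gamma_{E}}(e^{it})=\frac{1}{2\pi N}\,|U_{N}'(t)|/\sqrt{1-U_{N}(t)^{2}}$ together with the branch structure of $E$ recalled above. (iii) Arbitrary finite interval systems: approximate $\Gamma_{E}$ from inside and from outside by T-sets (Totik's polynomial-inverse-image approximation) and pass to the limit, using that $2\pi\,\omega_{\Gamma_{E}}(e^{it})$ is the sum of the two normal derivatives of the Green's function of $\overline{\bC}\setminus\Gamma_{E}$ at $e^{it}$ and that these depend continuously on the set at a fixed inner point.

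I expect step (iii) to be the main obstacle: an individual inequality survives such a limiting procedure easily, but here one must guarantee that the \emph{sharp} Bernstein factor is preserved, i.e. that the underlying extremal problem is stable under the T-set approximation of $\Gamma_{E}$; this continuity of the Bernstein constant, rather than any single estimate, is the real content, and is handled exactly as in the planar theory of Bernstein--Szeg\H{o} inequalities on arc systems. A point to keep in mind throughout is that the inequality is genuinely pointwise only at \emph{inner} points of $\Gamma_{E}$: as $e^{it_{0}}$ approaches an endpoint one has $\omega_{\Gamma_{E}}(e^{it_{0}})\to\infty$ and the estimate degenerates, consistently with $|T_{n}'|$ being unbounded relative to $\|T_{n}\|_{E}$ near endpoints.
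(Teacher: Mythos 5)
The paper does not prove this lemma at all: it is quoted verbatim as a special case of Lukashov's theorem \cite{MR2069196}, so the only ``proof'' in the paper is the citation, and your attempt has to be measured against the actual content of that cited result. Your preparatory reductions are fine: the passage to real $T_{n}$, the lift $R(z)=z^{n}T_{n}(t)$ with the identity $R'(e^{it})=e^{i(n-1)t}\bigl(nT_{n}(t)-iT_{n}'(t)\bigr)$, and the identification of Videnskii's constant with $2\pi\omega_{\Gamma_{[-\beta,\beta]}}(e^{it})$ are all correct.

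The genuine gap is in your step (ii), and in the sentence preceding it. An arbitrary trigonometric polynomial on a T-set $E=U_{N}^{-1}[-1,1]$ is \emph{not} a polynomial of $U_{N}$, so ``pull back through $U_{N}$ to reduce to the interval case'' is not a substitution: the formula $\omega_{\Gamma_{E}}(e^{it})=\frac{1}{2\pi N}|U_{N}'(t)|/\sqrt{1-U_{N}(t)^{2}}$ transfers the single-interval inequality only to polynomials of the form $S(U_{N}(t))$ (possibly times $\sqrt{1-U_{N}^{2}}$), and getting from those to general $T_{n}$ without losing the sharp constant is precisely the hard step --- compare how much machinery the paper's own $L^{p}$ argument needs for the analogous passage (its step (b), Lemmas \ref{lem:7}--\ref{lem:11}); a naive symmetrization $\sum_{h}T_{n}(t_{h})$ does not control $T_{n}'(t_{0})$ alone. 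Likewise, the assertion that conjugate-reciprocity ``buys back'' the factor $2$ for degree-$2n$ polynomials on several arcs is not a reduction but a restatement of the theorem: that Bernstein--Szeg\H{o} self-improvement on a system of arcs is exactly Lukashov's result, proved there via conformal/automorphic-function machinery (and obtainable alternatively by Totik's inverse-image method \cite{MR1864632,MR2961187}), and your sketch supplies no mechanism for it. By contrast, your step (iii) is not the main obstacle you fear: one only needs approximation of $E$ from inside by T-sets, applies the T-set inequality with $\Vert T_{n}\Vert_{E(\delta)}\le\Vert T_{n}\Vert_{E}$, and lets $\delta\to0$ using the monotonicity and convergence of the densities exactly as in \eqref{w/w} of Lemma \ref{sulykonv}; no delicate ``stability of the extremal problem'' is required. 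So the missing idea is the sharp pointwise inequality on a T-set for arbitrary $T_{n}$, which is the heart of the cited theorem.
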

The following lemma states that a T-set $E$ of order $N$ can be
deformed such a way that it remains a T-set of order $N$, see also
Proposition 3.5 in \cite{totikvarga}.
\begin{lem}
\label{sulykonv} Let $E$ be the union of the disjoint intervals
$\left[v_{2l-1},v_{2l}\right]\subset\left(-\pi,\pi\right)$, where
$l=1,2,\dots,m$ and $v_{2l-1}<v_{2l}<v_{2l+1}$. If $E$ is a T-set
of order $N$, $E=U^{-1}\left[-1,1\right]$, then there is a family
of $E\left(\delta\right)$, $\delta>0$ of T-sets of order $N$ such
that \end{lem}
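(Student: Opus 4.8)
The plan is to realize the deformation by a single rescaling of the defining polynomial. Writing $E=U^{-1}[-1,1]$ with $U$ a real trigonometric polynomial of degree $N$, I would set
\[
U_{\delta}(t):=\frac{U(t)}{1-\delta},\qquad E(\delta):=U_{\delta}^{-1}[-1,1]=\{t:\ |U(t)|\le 1-\delta\},
\]
for small $\delta>0$. Geometrically this shrinks each component of $E$ slightly at its endpoints and, crucially, opens a genuine gap (of width of order $\sqrt{\delta}$) around every inner extremal point $\zeta$, because there $U(\zeta)=\pm1$, $U'(\zeta)=0$ and $U''(\zeta)\ne0$; thus $E(\delta)$ has all $2N$ branches of $E$ separated into distinct components. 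If one prefers to open only the inner extremal points lying in a prescribed subcollection of small intervals, the same effect is achieved by a local modification of $U$ near those points and the verifications below are unchanged.

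The proof then amounts to three verifications. \emph{(i) $E(\delta)$ is a T-set of order $N$.} Since $E$ is a T-set, every critical value of $U$ has modulus $\ge1$, hence every critical value of $U_{\delta}$ has modulus $\ge(1-\delta)^{-1}>1$; so $U_{\delta}$ has no critical point on $E(\delta)$, is strictly monotone on each component of $E(\delta)$, and maps it onto $[-1,1]$. Restricting $U$ to one of its $2N$ branches $\beta$, the corresponding piece of $E(\delta)$ is exactly $U|_{\beta}^{-1}([-1+\delta,1-\delta])$, a single nondegenerate subinterval; these $2N$ subintervals are pairwise disjoint (consecutive ones that used to meet at an inner extremal point now avoid it), so $E(\delta)$ has precisely $2N$ components and $U_{\delta}$ runs through $[-1,1]$ exactly $2N$ times over it. \emph{(ii) Nesting and convergence.} For $0<\delta'<\delta$ one has $E(\delta)\subset E(\delta')\subset E$ and $\bigcup_{\delta>0}E(\delta)=\{|U|<1\}$, so $E(\delta)\to E$ as $\delta\to0^{+}$ with $|E\setminus E(\delta)|=O(\sqrt{\delta})$; the endpoints of $E(\delta)$ converge to the component endpoints of $E$ (at rate $O(\delta)$, using $U'\ne0$ there) and to the inner extremal points (at rate $O(\sqrt{\delta})$). \emph{(iii) Convergence of the equilibrium density.} Using the classical formula $2\pi\,\omega_{\Gamma_{F}}(e^{it})=|V'(t)|\big/\big(N\sqrt{1-V(t)^{2}}\big)$, valid at interior points of a T-set $F=V^{-1}[-1,1]$ of order $N$, applied to $(U,E)$ and $(U_{\delta},E(\delta))$, one gets
\[
2\pi\,\omega_{\Gamma_{E(\delta)}}(e^{it})=\frac{|U'(t)|}{(1-\delta)\,N\,\sqrt{1-U(t)^{2}/(1-\delta)^{2}}}\longrightarrow 2\pi\,\omega_{\Gamma_{E}}(e^{it})\qquad(\delta\to0^{+})
\]
uniformly on compact subsets of the interior of $E$; this upgrades to convergence of the functionals $A(T_{n},\cdot)$ and $B(T_{n},\cdot)$ on $E(\delta)$ to those on $E$.

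The step needing genuine care is the combination of (i) and (iii): first, that $E(\delta)$ is a T-set \emph{of order exactly $N$} — which rests on recalling that a degree-$N$ real trigonometric polynomial all of whose critical values have modulus $\ge1$ defines a T-set of order $N$, reduced here to the elementary branch count above via the nondegeneracies $U''(\zeta)\ne0$ and $U'(v_{k})\ne0$ that follow from $E$ being a T-set; and second, the convergence $\omega_{\Gamma_{E(\delta)}}\to\omega_{\Gamma_{E}}$ in the form actually needed in the application, namely uniformly near the newly created $O(\sqrt{\delta})$-gaps around the former inner extremal points, where the density has a moving $\mathrm{dist}^{-1/2}$-type singularity. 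The latter is where one must be quantitative, but since these gaps have length $O(\sqrt{\delta})$ and, for $0<p<1$, the singularity is integrable against $|T_{n}|^{p}$ and $|T_{n}'/(n\,2\pi\,\omega_{\Gamma_{E}})|^{p}$, the excised contributions are uniformly negligible, exactly as in the treatment of Proposition 3.5 in \cite{totikvarga}; everything else (nesting, Hausdorff convergence, locally uniform density convergence) is immediate from the explicit formulas.
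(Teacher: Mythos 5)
Your rescaling $U_{\delta}=U/(1-\delta)$, $E(\delta)=\{|U|\le 1-\delta\}$ does produce a T-set of order $N$, but it is the wrong deformation: it contradicts the very conclusions the lemma asserts. Conclusion (i) requires $E(\delta)=\bigcup_{l}[v_{2l-1},v_{2l}(\delta)]$, i.e.\ $E(\delta)$ has the same $m$ components as $E$ with the \emph{left} endpoints unchanged and only the right endpoints moved; your set shrinks every component at both ends and, as you yourself emphasize, splits it into $2N$ pieces by opening gaps of width $\sim\sqrt{\delta}$ at the inner extremal points. Conclusion (ii) requires exactly the opposite of what your construction does: the inner extremal points $\zeta_{l,h}$ must \emph{survive} as inner extremal points $\zeta_{l,h}(\delta)$ of $E(\delta)$, merely shifted (strictly decreasing in $\delta$, converging back to $\zeta_{l,h}$); in your $E(\delta)$ there are no inner extremal points at all. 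Conclusion (iii) also fails for your set: the core set \eqref{set:core} contains neighbourhoods of all the $\zeta_{l,h}$, and there the ratio $\omega_{\Gamma_{E(\delta)}}/\omega_{\Gamma_{E}}$ cannot satisfy \eqref{w/w} with $D_{\delta}\to1$ — near the newly created gap edges $\omega_{\Gamma_{E(\delta)}}$ blows up like an inverse square root while $\omega_{\Gamma_{E}}$ stays bounded, and inside the gaps the ratio is not even defined. Your closing suggestion to absorb these regions by integrability of the singularity for $0<p<1$ replaces the lemma by a different statement; the paper uses the uniform bound \eqref{w/w} as stated (e.g.\ through the $D_{\delta}^{1-p}$ factors and \eqref{est:omega_power}), and the downstream argument for intervals of the second type needs $H\subset E(\delta)$ with $H$ free of inner extremal points of $E(\delta)$ — with your construction $H\not\subset E(\delta)$, since the gap opens precisely inside $H$ around $\zeta_{k,j}$.

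The needed construction is an asymmetric deformation of the defining polynomial (as in Proposition 3.5 of Totik--Varga): one perturbs $U$ within the class of degree-$N$ trigonometric polynomials generating T-sets so that each component keeps its left endpoint, its right endpoint $v_{2l}(\delta)$ and all its inner extremal points $\zeta_{l,h}(\delta)$ move strictly left and tend back to the original ones as $\delta\to0^{+}$, and then \eqref{w/w} is deduced on the fixed core set from $E(\delta)\subset E$ (lower bound, by monotonicity of the equilibrium density under set inclusion) together with continuity of the density under this endpoint perturbation (upper bound with $D_{\delta}\to1$). Your remark that one could instead ``locally modify $U$'' near selected extremal points does not repair this: a local modification of a trigonometric polynomial is not a trigonometric polynomial of degree $N$, so it does not stay in the class of T-sets of order $N$.
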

\begin{elabeling}{00.00.0000}
\item [{(i)}] 
\[
E\left(\delta\right)=\bigcup_{l=1}^{m}\left[v_{2l-1},v_{2l}\left(\delta\right)\right],
\]
 where each $v_{2l}\left(\delta\right)$ strictly decreases in $\delta$
and converges to $v_{2l}$ for every $l\in\{1,2,\dots,m\}$ and $E\left(\delta\right)\subset E$; 
\item [{(ii)}] if $U$ has the inner extremal points $\zeta_{l,1}<\zeta_{l,2}<\dots<\zeta_{l,r_{l}-1}$
in its $l$-th component $\left[v_{2l-1},v_{2l}\right]$ then $E\left(\delta\right)$
also has $r_{l}-1$ inner extremal points $\zeta_{l,1}(\delta)<\zeta_{l,2}\left(\delta\right)<\dots<\zeta_{l,r_{l}-1}\left(\delta\right)$
in $\left[v_{2l-1},v_{2l}\left(\delta\right)\right]$ such that each
$\zeta_{l,h}\left(\delta\right)$ strictly decreases in $\delta$
and converges to $\zeta_{l,h}$, $h=h_{l}\in\left\{ 1,2,\dots,r_{l}-1\right\} $; 
\item [{(iii)}] if $\omega_{\Gamma_{E}}$, $\omega_{\Gamma_{E\left(\delta\right)}}$
denote the corresponding equilibrium densities of $\Gamma_{E}$ and
$\Gamma_{E\left(\delta\right)}$ then there is a sequence $D_{\delta}=D(E\left(\delta\right))\to1$
for which the estimates 
\begin{equation}
1\le\frac{\omega_{\Gamma_{E\left(\delta\right)}}\left(e^{it}\right)}{\omega_{\Gamma_{E}}\left(e^{it}\right)}\le D_{\delta}\label{w/w}
\end{equation}
 are valid for every 
\begin{equation}
t\in\bigcup_{l=1}^{m}\left[\frac{v_{2l-1}+\zeta_{l,1}}{2},\frac{\zeta_{l,r_{l}-1}+v_{2l}}{2}\right].\label{set:core}
\end{equation}
 and for sufficiently small $\delta>0$. 
\end{elabeling}
Note that if $U$ is strictly monotone on $\left[v_{2l-1},v_{2l}\right]$
for some $l$, then $r_{l}=0$ and we let $\left[\frac{v_{2l-1}+\zeta_{l,1}}{2},\frac{\zeta_{l,r_{l}-1}+v_{2l}}{2}\right]:=\emptyset$.

We need to approximate the characteristic function of an interval
by a trigonometric polynomial of small degree, see Lemma 3.1 in \cite{totikvarga}.
\begin{lem}
\label{approx} Fix $0<p<\infty$. We know that $1/2>\theta>4\kappa$.
Then there are constants $\coni,\conxvi>0$ with the following properties.
Assume that $H=H(J)$ ($J\subset J_{n}$) is an interval with characteristic
function $\chi_{H}(t)$. There exists a trigonometric polynomial $q=q(H,n;t)$
with $\deg\left(q\right)\le2n^{2\theta}+1\le3n^{2\theta}$ which satisfies
\begin{equation}
0\le q\left(t\right)\le1,\quad t\in\left[-\pi,\pi\right),\label{qnorm1}
\end{equation}
furthermore,

\begin{equation}
\left|q(t)-\chi_{H}(t)\right|\le\conxvi e^{-\coni n^{\theta}},\label{approx1}
\end{equation}

\[
\left|q'(t)\right|\le\conxvi e^{-\coni n^{\theta}},
\]
whenever $t\in\left[-\pi,\pi\right]\setminus H_{b}.$
\end{lem}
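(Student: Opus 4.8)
The statement is a restatement of Lemma~3.1 of \cite{totikvarga}, and since the polynomial $q$ and all the bounds \eqref{qnorm1}--\eqref{approx1} do not involve $p$ at all, the plan is simply to reproduce that construction; the fixed value $p\in(0,\infty)$ plays no role. I would first record the combinatorial data. Since $H=H(J)$ is an interval built out of small intervals, it together with its two bordering small intervals forms a slightly larger interval $H^{*}\supset H\cup H_{b}$, each bordering interval having length at least $\ell:=1/(2n^{\kappa})$. Let $M$ be the interval obtained from $H$ by adjoining the inner half of each of its two bordering small intervals, so that $H\subset M\subset H^{*}$ while $\mathrm{dist}(H,[-\pi,\pi]\setminus M)\ge\ell/2$, $\mathrm{dist}(M,[-\pi,\pi]\setminus H^{*})\ge\ell/2$, and $M$ has exactly two endpoints.

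Next take the concentrated nonnegative kernel $k_{m}(s):=c_{m}\cos^{2m}(s/2)$, with $c_{m}>0$ normalised by $\int_{-\pi}^{\pi}k_{m}=1$; it is a trigonometric polynomial of degree $m$, satisfies $c_{m}\le C\sqrt{m}$, and the elementary bound $\cos^{2}(s/2)\le e^{-s^{2}/\pi^{2}}$ gives $k_{m}(s)\le C\sqrt{m}\,e^{-ms^{2}/\pi^{2}}$ for $|s|\le\pi$. Choose $m:=\lfloor n^{2\theta}\rfloor$ and put $q:=\chi_{M}*k_{m}$, the $2\pi$-periodic convolution; then $q$ is a real trigonometric polynomial of degree $\le m\le 2n^{2\theta}+1$, and from $0\le\chi_{M}\le1$, $k_{m}\ge0$, $\int k_{m}=1$ we get $0\le q\le1$, which is \eqref{qnorm1}. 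For $t\notin H_{b}$ one has either $t\in H$, so $\mathrm{dist}(t,[-\pi,\pi]\setminus M)\ge\ell/2$ and $1\ge q(t)\ge1-\int_{|s|\ge\ell/2}k_{m}$, or $t\notin H^{*}$, so $\mathrm{dist}(t,M)\ge\ell/2$ and $0\le q(t)\le\int_{|s|\ge\ell/2}k_{m}$; in either case $|q(t)-\chi_{H}(t)|\le\int_{|s|\ge\ell/2}k_{m}$. Likewise $q'(t)=k_{m}(t-c_{1})-k_{m}(t-c_{2})$ for the two endpoints $c_{1},c_{2}$ of $M$, so $|q'(t)|\le 2\max_{|s|\ge\ell/2}k_{m}(s)$ whenever $t\notin H_{b}$.

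Finally, both $\int_{|s|\ge\ell/2}k_{m}$ and $\max_{|s|\ge\ell/2}k_{m}$ are at most $C\sqrt{m}\,e^{-m\ell^{2}/(4\pi^{2})}\le Cn^{\theta}\exp(-cn^{2\theta-2\kappa})$, and the standing assumption $1/2>\theta>4\kappa$ gives $2\theta-2\kappa>\theta$ with room to spare, so this is $\le\conxvi e^{-\coni n^{\theta}}$ for suitable constants $\coni,\conxvi>0$ after absorbing the polynomial factor $n^{\theta}$ into the exponential; this is \eqref{approx1} together with the derivative bound. I do not expect any genuine obstacle: the only delicate point is to place the transition region of $q$ strictly inside $H_{b}$ — which is precisely why one convolves the enlarged interval $M$ rather than $H$ itself — together with balancing degree against concentration, for which the gap between $2\theta$ and $2\kappa$ leaves ample margin.
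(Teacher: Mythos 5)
Your construction is correct and is essentially the same argument as the one behind the paper's statement: the paper merely cites Lemma 3.1 of \cite{totikvarga} (with the refinement traced through \cite{nagytookos}), and that proof is exactly this kind of convolution of the characteristic function of a slightly enlarged interval with a concentrated nonnegative kernel of degree about $n^{2\theta}$, with the transition region buried inside $H_{b}$ and the gap $\theta>4\kappa$ (indeed $\theta>2\kappa$ suffices here) giving the $e^{-\coni n^{\theta}}$ decay. Your write-up is a sound self-contained version of that argument, including the correct bookkeeping showing $\coni,\conxvi$ depend only on $\theta,\kappa$ and not on $E$, $H$ or $p$.
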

The independence of constant $\conxvi$  of $E$ and the degree estimate
can be seen from following the proof in \cite{nagytookos}. 

Let $F_{n}:=\conxvi\exp\left(-\coni n^{\theta}\right)$. Later we
will need that there exists $\conxix>0$ such that
\begin{equation}
F_{n}^{p}\le\conxix n^{-\gamma}.\label{est:fp_becsles}
\end{equation}
Of course, $\conxix$ depends on $\gamma$, $\theta$, $\coni$ and
$\conxvi$ only.

There is one more assumption mentioned earlier. This was not present
when the power was greater than $1$ (here $p\le1$). The assumption
is 
\begin{equation}
\left(1-2\theta\right)p\ge\gamma.\tag{{IV}}\label{eq:uj_felt}
\end{equation}
For example, $\theta=1/4$, $\kappa=1/32$ and $\gamma=\min\left(1/65,p/2\right)$
is a good choice.

Next lemma says that the integral of a trigonometric polynomial cannot
be arbitrarily small, see Lemma 3.12 in \cite{totikvarga}. This is
a Nikolskii type inequality.
\begin{lem}
\label{also} Let $0<p<\infty$, $E$ consist of finitely many intervals,
$I$ be a fixed subinterval of $E$ and let $T$ be an arbitrary trigonometric
polynomial with the property $\sup_{t\in I}|T\left(t\right)|=1$.
Then, there exists $\conxviii>0$ depending on the length of $I$
only (and is independent of $E$, $p$ and $T$) such that 
\[
\int_{I}\left|T\left(t\right)\right|^{p}\omega_{\Gamma_{E}}\left(e^{it}\right)\,\mathrm{d}t\ge\conxviii\frac{1}{2^{p}}\frac{1}{\left(\deg T\right)^{2}}.
\]

\end{lem}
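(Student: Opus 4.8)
The plan is to reduce the lower bound to two standard facts: a Markov‑type inequality for trigonometric polynomials on the fixed arc $I$, and the pointwise bound $\omega_{\Gamma_{E}}(e^{it})\ge\frac1{2\pi}$. Write $n:=\deg T$ and assume $n\ge1$ (otherwise the right‑hand side is not defined). First I would pick $t_{0}\in I$ with $|T(t_{0})|=\|T\|_{I}=1$, which exists because $I$ is closed and $T$ is continuous. By the classical Markov‑type inequality for trigonometric polynomials restricted to an arc (a standard consequence of Markov's inequality for algebraic polynomials via $t\mapsto\cos t$) there is a constant $c_{I}$ depending only on $|I|$ — after a translation one may take $I$ centred at $0$, so genuinely only the length enters — with $\|T'\|_{I}\le c_{I}n^{2}\|T\|_{I}=c_{I}n^{2}$. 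Put $\delta:=\min\{(4c_{I}n^{2})^{-1},|I|/2\}$ and $I'':=[t_{0}-\delta,t_{0}+\delta]\cap I$. Then for $t\in I''$ we have $|T(t)|\ge|T(t_{0})|-\|T'\|_{I}\,\delta\ge\tfrac12$, and $|I''|\ge\delta$ because $t_{0}\in I$ and $\delta\le|I|/2$; moreover $\delta\ge c(|I|)\,n^{-2}$ with $c(|I|):=\min\{(4c_{I})^{-1},|I|/2\}$, since $n\ge1$.

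The step that makes $\conxviii$ independent of $E$ is the uniform density bound $\omega_{\Gamma_{E}}(e^{it})\ge\frac1{2\pi}$ for (Lebesgue‑)a.e.\ $t\in E$. I would deduce it from Lukashov's inequality \eqref{Luk}: for an interior point $t_{1}$ of $E$, apply \eqref{Luk} to $S(t):=\sin\bigl(n(t-t_{1})\bigr)$, a trigonometric polynomial of degree $n$ with $\|S\|_{E}\le1$ and $S'(t_{1})=n$, to get $n=|S'(t_{1})|\le n\,2\pi\,\omega_{\Gamma_{E}}(e^{it_{1}})$, i.e.\ $\omega_{\Gamma_{E}}(e^{it_{1}})\ge\frac1{2\pi}$. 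Since $E$ agrees with its interior up to a finite set, the bound holds a.e.\ on $E$, in particular on $I''$. (Alternatively one may invoke domain monotonicity of Green's functions: the unbounded component of $\mathbf{C}\setminus\Gamma_{E}$ contains $\{|z|>1\}$, whose Green's function with pole at $\infty$ is $\log|z|$, which forces the same lower bound on the normal derivative and hence on the density.)

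Combining the two steps,
\[
\int_{I}|T(t)|^{p}\,\omega_{\Gamma_{E}}(e^{it})\,\mathrm{d}t\ \ge\ \int_{I''}|T(t)|^{p}\,\omega_{\Gamma_{E}}(e^{it})\,\mathrm{d}t\ \ge\ \frac{1}{2^{p}}\cdot\frac{1}{2\pi}\,|I''|\ \ge\ \frac{1}{2^{p}}\cdot\frac{c(|I|)}{2\pi}\cdot\frac{1}{n^{2}},
\]
so the lemma holds with $\conxviii:=c(|I|)/(2\pi)$, which depends on $|I|$ only. Note that $p$ enters solely through $|T|\ge\tfrac12\Rightarrow|T|^{p}\ge2^{-p}$, so the constant is automatically free of $p$, and it is free of $T$ because neither $c_{I}$ nor the lower bound on $\omega_{\Gamma_{E}}$ involves $T$. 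I expect the only delicate point to be the $E$‑uniformity of the density estimate in the second step (the reason one cannot simply absorb $\omega_{\Gamma_{E}}$ into the constant); everything else is routine bookkeeping around the two standard inequalities.
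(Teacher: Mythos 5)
Your argument is correct, and it is worth noting that the paper itself does not prove this lemma at all: it is quoted as Lemma 3.12 of Totik--Varga, so what you have produced is a self-contained proof rather than a reproduction of an argument in the text. Your two ingredients are sound. The density bound $\omega_{\Gamma_{E}}\left(e^{it}\right)\ge\frac{1}{2\pi}$ at interior points is a known monotonicity property of equilibrium densities, and your derivation of it by testing Lukashov's inequality \eqref{Luk} on $\sin\bigl(n(t-t_{1})\bigr)$ is a clean way to get it from material already quoted in the paper (the alternative via domain monotonicity of the Green's function of $\{|z|>1\}$ is also fine). The localization step is also correct: with $\|T'\|_{I}\le c_{I}n^{2}$ you do get an interval $I''\subset I$ of length at least $\min\{(4c_{I})^{-1},|I|/2\}\,n^{-2}$ on which $|T|\ge\frac{1}{2}$, and translation invariance makes $c_{I}$ a function of $|I|$ alone, so your $\conxviii$ depends only on $|I|$, not on $E$, $p$ or $T$, exactly as required. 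The only point I would tighten is the parenthetical justification of the arc Markov inequality: a trigonometric polynomial is not an algebraic polynomial in $\cos t$ alone (it has sine terms), so the reduction ``via $t\mapsto\cos t$'' is not literally one line; the correct citation is Videnskii's Markov-type inequality for trigonometric polynomials on an arc of length less than $2\pi$ (or one can run the cosine substitution separately on the even and odd parts of $T$). With that reference supplied, the proof is complete; alternatively, a Remez-type estimate on the measure of $\{t\in I:|T(t)|\ge\frac{1}{2}\}$ would serve the same purpose as your Markov step.
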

The following lemma is a symmetrization technique for trigonometric
polynomials. For proof and details, we refer to \cite{totikvarga},
see Lemma 3.2.
\begin{lem}
Let $E$ be a T-set associated with the trigonometric polynomial $U_{N}$
of degree $N$. For a point $t\in E$ with $U_{N}(t)\in(-1,1)$ let
$t_{1},t_{2},\dots,t_{2N}$ be those points in $E$ which satisfy
$U_{N}\left(t_{h}\right)=U_{N}(t)$. If $V_{n}$ is a trigonometric
polynomial of degree at most $n$, then there is an algebraic polynomial
$S_{n/N}$ of degree at most $n/N$ such that 
\[
\sum_{h=1}^{2N}V_{n}\left(t_{h}\right)=S_{[n/N]}\left(U_{N}\left(t\right)\right).
\]

\end{lem}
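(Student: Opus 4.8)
The plan is to transfer everything to the unit circle and then read off the statement from elementary symmetric functions of the fibre of $U_N$. Write $z=e^{it}$ and represent $V_n(t)=z^{-n}P(z)$ with $P$ an algebraic polynomial of degree at most $2n$, and $U_N(t)=z^{-N}Q(z)$ with $\deg Q=2N$ and $Q(0)\neq 0$; the latter holds because a real trigonometric polynomial of exact degree $N$ has a nonzero $e^{-iNt}$-coefficient, and a T-set of order $N$ forces $\deg U_N=N$. Fix $t$ with $y:=U_N(t)\in(-1,1)$. Then $t_1,\dots,t_{2N}$ are exactly the points whose images $z_h:=e^{it_h}$ satisfy $Q(z)-yz^N=0$, and since the polynomial $R_y(z):=Q(z)-yz^N$ has degree $2N$ while a trigonometric polynomial of degree $N$ cannot assume a value of $(-1,1)$ more than $2N$ times on a period, $z_1,\dots,z_{2N}$ are precisely the roots of $R_y$. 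Observe that $\prod_h z_h$ equals $\pm Q(0)$ divided by the leading coefficient of $Q$, a nonzero constant independent of $y$.

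Now expand $P(z)=\sum_{k=0}^{2n}a_kz^k$; then
\[
\sum_{h=1}^{2N}V_n(t_h)=\sum_{h=1}^{2N}z_h^{-n}P(z_h)=\sum_{k=0}^{2n}a_k\,p_{k-n},\qquad p_j:=\sum_{h=1}^{2N}z_h^{\,j}.
\]
For $j\ge 0$, $p_j$ is a power sum of the roots of $R_y$; for $j<0$ it is a power sum of the roots of the reciprocal polynomial $\widetilde R_y(z):=z^{2N}R_y(1/z)$, which again has degree exactly $2N$ because $Q(0)\neq 0$. In both $R_y$ and $\widetilde R_y$ only the coefficient of $z^N$ depends on $y$, and it does so linearly, so Newton's identities express each $p_j$ (for $|j|\le n$) as a polynomial in $y$; hence $\sum_h V_n(t_h)=S(U_N(t))$ for some algebraic polynomial $S$. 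For the degree bound I would run Newton's recursion while tracking $d_j:=\deg_y p_j$: since the only $y$-dependent elementary symmetric function of the roots is $e_N$, of degree $1$, an induction on $j$ yields $d_j\le\lfloor |j|/N\rfloor$, and because $|k-n|\le n$ for $0\le k\le 2n$ this gives $\deg S\le\lfloor n/N\rfloor=[n/N]$.

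The step I expect to be the main obstacle is the degree bookkeeping just described: one has to handle separately the Newton identities for $1\le j\le 2N$ (which carry the extra term $\pm j e_j$) and for $j>2N$, and to do the analogous count for $\widetilde R_y$, so that the inductive hypothesis $d_j\le\lfloor|j|/N\rfloor$ is preserved at every step; the rest is formal manipulation. Two smaller points also need verification: that the identity $\sum_h V_n(t_h)=S(U_N(t))$, established first for $t$ with $U_N(t)$ a regular value in $(-1,1)$, persists at the inner extremal points by continuity (where the fibre is taken with multiplicity), and that $\deg U_N=N$, so that neither $R_y$ nor $\widetilde R_y$ loses degree and the fibre genuinely consists of $2N$ points.
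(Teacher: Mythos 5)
Your proof is correct. One point of context: the paper itself does not prove this lemma at all --- it is quoted with a pointer to Lemma 3.2 of \cite{totikvarga} --- so there is no in-paper argument to compare against; your write-up is a self-contained substitute. Your route (write $V_n(t)=e^{-int}P(e^{it})$, $U_N(t)=e^{-iNt}Q(e^{it})$ with $\deg Q=2N$ and $Q(0)\neq0$, identify the fibre over $y=U_N(t)\in(-1,1)$ with the $2N$ simple roots of $R_y(z)=Q(z)-yz^{N}$ --- all roots, since $E$ already supplies $2N$ distinct unimodular solutions and $\deg R_y=2N$ --- and expand $\sum_h V_n(t_h)=\sum_{k=0}^{2n}a_k\,p_{k-n}$ in power sums of $R_y$ and of its reciprocal $\widetilde{R}_y$) is sound, and the two checks you flag do go through: among the elementary symmetric functions of the roots of either $R_y$ or $\widetilde{R}_y$ only $e_N$ depends on $y$, linearly, because the leading coefficients $c_N$ and $c_{-N}=\overline{c_N}$ are nonzero and $y$-free; and the induction $\deg_y p_j\le\lfloor|j|/N\rfloor$ survives both regimes of Newton's identities, since for $j\le 2N$ the extra term $\pm j\,e_j$ has degree $1$ only when $j=N$ (exactly the allowed bound), while for $j>2N$ the term $e_N p_{j-N}$ has degree at most $1+\lfloor (j-N)/N\rfloor=\lfloor j/N\rfloor$. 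With $|k-n|\le n$ for $0\le k\le 2n$ this yields $\deg S\le[n/N]$, as claimed; the continuity remark about inner extremal points is not needed, since the lemma is only asserted for $U_N(t)\in(-1,1)$. Relative to the symmetric-function/inverse-image argument in the cited source, your computation is more explicit and elementary, at the price of the reciprocal-polynomial bookkeeping for the negative power sums, which you handle correctly.
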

If $U_{N,h}^{-1}$ denotes the inverse of $U_{N}$ restricted on the
branch $B_{h}$ then $t_{h}=U_{N,h}^{-1}\left(U_{N}\left(t\right)\right).$
This shows that 
\begin{equation}
\frac{\mathrm{d}}{\mathrm{d}t}t_{h}=\frac{U_{N}'(t)}{U_{N}'\left(t_{h}\right)}.\label{t_ideriv}
\end{equation}
It is known (see, e.g., Lemma 3.1 in \cite{MR2961187}) that
\begin{equation}
\omega_{\Gamma_{E}}\left(e^{it}\right)=\frac{1}{2\pi N}\frac{\left|U_{N}'(t)\right|}{\sqrt{1-U_{N}\left(t\right)^{2}}}.\label{eq_meas}
\end{equation}
The previous equation, together with \eqref{t_ideriv}, shows that
\begin{equation}
\left|\frac{\omega_{\Gamma}\left(e^{it_{h}}\right)\frac{\mathrm{d}}{\mathrm{d}t}t_{h}}{\omega_{\Gamma}\left(e^{it}\right)}\right|=1.\label{egyszerusites}
\end{equation}

Let $T_{n}$ be a trigonometric polynomial of degree $n,$ and let
$q$ be the trigonometric polynomial from Lemma \ref{approx}. Then,
by the previous lemma, 
\[
T_{n}^{*}\left(t\right):=\sum_{h=1}^{2N}T_{n}\left(t_{h}\right)q\left(t_{h}\right)
\]
is a polynomial of the trigonometric polynomial $U_{N}$, so $T_{n}^{*}$
makes a link between step (a) and step (b). This connection is shown
by the the analogue Lemmas \ref{lem:7} and \ref{lem:8} of Lemma
3.3 from \cite{totikvarga} (see also Lemmas 7, 8 in \cite{nagytookos}).
We will prove the following three lemmas in the next subsection.
\begin{lem}
\label{lem:7}Let $0<p<1$. Suppose we have a $T$-set $E$ associated
with the trigonometric polynomial $U_{N}$ of degree $N$. We also
have an interval $H=H\left(J\right)$ satisfying the property \eqref{eq:I}.
Then, using $T_{n}^{*}$ defined for $T_{n}$ and $n^{*}:=\deg\left(T_{n}^{*}\right)\le\deg\left(T_{n}\right)+\deg\left(q\right)$,
we have
\begin{multline*}
\left|\left(\frac{n^{*}}{n}\right)^{p}A\left(T_{n}^{*},E\right)-2N\, A\left(T_{n},H\right)\right|\\
\le2N\left(4F_{n}^{p}+a\left(T_{n},H_{b}\right)\right)\, A\left(T_{n},E\right)+\left(2N\right)4\cdot3^{p}\left(\frac{n^{2\theta}}{n}\right)^{p}\, B\left(T_{n},E\right).
\end{multline*}

\end{lem}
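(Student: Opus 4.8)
The plan is to express both sides in terms of the symmetrized polynomial and then estimate the errors coming from (i) replacing $T_n q$ by $T_n$ off the bordering intervals and (ii) the derivative falling on $q$. Concretely, I would start from the definition $T_n^*(t)=\sum_{h=1}^{2N}T_n(t_h)q(t_h)$, differentiate, and use the chain rule identity \eqref{t_ideriv} to write
\[
\left(T_n^*\right)'(t)=\sum_{h=1}^{2N}\left(T_n'(t_h)q(t_h)+T_n(t_h)q'(t_h)\right)\frac{U_N'(t)}{U_N'(t_h)}.
\]
Dividing by $n^* 2\pi\,\omega_{\Gamma_E}(e^{it})$, raising to the $p$-th power, and using the subadditivity $|x+y|^p\le|x|^p+|y|^p$ valid for $0<p<1$ (this is the place where $p<1$ is actually exploited favourably — no constant loss), I would bound the integrand by a sum over $h$ of a ``main'' term $\left|T_n'(t_h)q(t_h)/(n^*2\pi\,\omega(e^{it}))\right|^p\omega(e^{it})$ plus an ``error'' term involving $q'(t_h)$. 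Then I change variables $t\mapsto t_h$ in the integral over $\Gamma_E$-fiber, absorbing the Jacobian via \eqref{egyszerusites}, so that each main term becomes an integral of $|T_n'q/\ldots|^p\omega$ over the branch $B_h$.

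Next I would split the analysis according to whether a branch $B_h$ meets $H$ or not. On branches (or portions) outside $H\cup H_b$, Lemma \ref{approx} gives $|q|\le F_n$ and $|q'|\le F_n$, so those contributions to the main term are at most $F_n^p$ times $A(T_n,\cdot)$ after applying Lukashov's inequality \eqref{Luk} to control $|T_n'/(n2\pi\omega)|\le\|T_n\|_E$ pointwise — actually it is cleaner to bound $|T_n'(t_h)q(t_h)/\ldots|^p\le|T_n'(t_h)/(n2\pi\omega(e^{it_h}))|^p F_n^p$ directly, and then integrate. On the bordering intervals $H_b$, $q$ is merely between $0$ and $1$, so the main term there is $\le a(T_n,H_b)\,A(T_n,E)$ by definition of $a(\cdot,\cdot)$; there are at most $2N$ such fiber-pieces, which produces the factor $2N\,a(T_n,H_b)$. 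On $H$ itself, $q$ differs from $1$ by at most $F_n$, and $|(1)^p-(q)^p|$-type estimates (or again subadditivity, writing $T_n'q = T_n' - T_n'(1-q)$) let me replace $q$ by $1$ at the cost of another $F_n^p$-term times $A(T_n,E)$, yielding the main comparison $2N\,A(T_n,H)$ with total error $2N(4F_n^p+a(T_n,H_b))A(T_n,E)$ — here the constant $4$ is bookkeeping for the several places $F_n^p$-errors enter (the at-most-$2N$ over-counting, the two sides of the replacement, etc.).

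For the error term coming from $q'$: since $|q'(t_h)|\le F_n$ off $H_b$ and $|q'|$ may only be bounded using $\deg q\le 3n^{2\theta}$ and a Bernstein-type bound on $H_b$, I would estimate $|T_n(t_h)q'(t_h)/(n^*2\pi\omega)|$. Using Lukashov or just the trivial Bernstein-Markov bound $\|q'\|\le \deg(q)\cdot\|q\|\le 3n^{2\theta}$, this term is $\le (3n^{2\theta}/n)^p |T_n(t_h)|^p$ up to the ratio $\omega(e^{it_h})/\omega(e^{it})\cdot|U_N'(t)/U_N'(t_h)|$ which equals $1$ after the change of variables by \eqref{egyszerusites}; integrating over each of the $2N$ fibers and summing gives a contribution $\le (2N)\,4\cdot 3^p (n^{2\theta}/n)^p B(T_n,E)$, where again $4$ and $3^p$ absorb the degree constant $3$, the $(n^*/n)^p$ versus $1$ discrepancy, and the over-counting. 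The main obstacle will be the careful bookkeeping of constants and of the $n^*$ versus $n$ normalization — one has to check that $(n^*/n)^p\le (1+\deg(q)/n)^p$ is close enough to $1$ that it can be folded into the stated constants rather than appearing as an extra multiplicative factor, and that the change of variables on each fiber is applied with the correct orientation so that \eqref{egyszerusites} genuinely kills every Jacobian. The inequality $|x+y|^p\le |x|^p+|y|^p$ for $0<p<1$ is what makes the whole argument go through without the loss of a multiplicative constant that one would incur for $p>1$; this is exactly why the $p<1$ case is in some respects cleaner, and it is the single structural difference from the proof of Lemma 3.3 in \cite{totikvarga}.
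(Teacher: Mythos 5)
Your proposal is correct and follows essentially the same route as the paper's proof: the same symmetrization of $T_n^*$, the same three-way splitting according to whether the fiber points lie in $H$, in $H_b$, or outside $H\cup H_b$ (the paper phrases this as splitting $E$ into $E\setminus U_N^{-1}(U_N(H\cup H_b))$, $U_N^{-1}(U_N(H_b))$ and $U_N^{-1}(U_N(H))$), the inequalities \eqref{ineq:1} and \eqref{ineq:2}, Lukashov's inequality \eqref{Luk} for the $q'$-terms, and the substitution identities \eqref{helyettesit1}, \eqref{helyettesit2} together with \eqref{egyszerusites}. Two small remarks: the $q'$-terms really must be handled via \eqref{Luk} (your ``ratio equals $1$'' cancellation presupposes the factor $\omega_{\Gamma_E}(e^{it_h})$ that Lukashov provides, whereas the plain Bernstein--Markov bound would leave an $E$-dependent weight), and the factor $(n^*/n)^p$ needs no absorption at all, since it exactly converts the normalization of $A(T_n^*,E)$ from $n^*$ to $n$.
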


\begin{lem}
\label{lem:8}With the same assumption as in Lemma \ref{lem:7} we
also have that

\[
\left|B\left(T_{n}^{*},E\right)-2N\, B\left(T_{n},H\right)\right|\le2N\left(3F_{n}^{p}+b\left(T_{n},H_{b}\right)\right)B\left(T_{n},E\right).
\]

\end{lem}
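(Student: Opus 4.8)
The plan is to transfer the integral $B(T_n^{*},E)$ onto a single branch of $E$ by symmetrization and then, on that branch, to isolate the dominant contribution $T_nq$ from an exponentially small remainder coming from the other $2N-1$ sheets. We may assume $T_n\not\equiv 0$ on $E$ (otherwise every quantity involved vanishes), so that $B(T_n,E)>0$ and $b$ is well defined; and since the crude bound $B(T_n^{*},E)\le 2N\,B(T_n,E)$ derived below makes the inequality trivial as soon as $F_n\ge 1$, we may also assume $n$ is large enough that $F_n<1$.

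First I would carry out the symmetrization. Since, as noted above, $T_n^{*}=\sum_{h=1}^{2N}T_n(t_h)q(t_h)$ is a polynomial of $U_N$, the function $|T_n^{*}|^p$ is constant along every fibre $\{t_1,\dots,t_{2N}\}$. Writing $E=\bigcup_{h=1}^{2N}B_h$ and parametrizing each $B_h$ by the branch $B_{h_0}$ from property \eqref{eq:I} through $t\mapsto t_h(t)=U_{N,h}^{-1}(U_N(t))$, the Jacobian identity \eqref{egyszerusites}, i.e.\ $\omega_{\Gamma_E}(e^{it_h})\,|\mathrm{d}t_h|=\omega_{\Gamma_E}(e^{it})\,\mathrm{d}t$, gives both
\[
B(T_n^{*},E)=2N\int_{B_{h_0}}\bigl|T_n^{*}(t)\bigr|^p\,\omega_{\Gamma_E}(e^{it})\,\mathrm{d}t
\]
and the identity, valid for every trigonometric polynomial $G$,
\[
\int_{B_{h_0}}\sum_{h=1}^{2N}\bigl|G(t_h)\bigr|^p\,\omega_{\Gamma_E}(e^{it})\,\mathrm{d}t=\sum_{h=1}^{2N}\int_{B_h}|G|^p\,\omega_{\Gamma_E}=B(G,E).
\]
Applying the second identity with $G=T_n$ and using $|q|\le 1$ together with $\left(\sum_h x_h\right)^p\le\sum_h x_h^p$ (here $0<p<1$ is used) yields the crude bound $B(T_n^{*},E)\le 2N\,B(T_n,E)$ mentioned above.

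Next I would work on $B_{h_0}$, where $t_{h_0}(t)=t$, so that $T_n^{*}(t)=T_n(t)q(t)+R(t)$ with $R(t)=\sum_{h\ne h_0}T_n(t_h)q(t_h)$. By \eqref{eq:I} the set $H\cup H_b$ is contained in $B_{h_0}$, so for $h\ne h_0$ the point $t_h$ lies in a different branch, in particular $t_h\notin H_b$, and Lemma \ref{approx} gives $|q(t_h)|\le F_n$; hence, using $0<p<1$ twice,
\[
|R(t)|^p\le F_n^p\left(\sum_{h=1}^{2N}|T_n(t_h)|\right)^{p}\le F_n^p\sum_{h=1}^{2N}|T_n(t_h)|^p ,
\]
so the identity of the previous step gives $\int_{B_{h_0}}|R|^p\,\omega_{\Gamma_E}\le F_n^p\,B(T_n,E)$, and since $\bigl||a+b|^p-|a|^p\bigr|\le|b|^p$ for $0<p<1$, this bounds the difference $\bigl|\int_{B_{h_0}}|T_n^{*}|^p\omega_{\Gamma_E}-\int_{B_{h_0}}|T_nq|^p\omega_{\Gamma_E}\bigr|$ by $F_n^p\,B(T_n,E)$. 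It then remains to compare $\int_{B_{h_0}}|T_nq|^p\omega_{\Gamma_E}$ with $B(T_n,H)$ by splitting $B_{h_0}=H\cup H_b\cup\bigl(B_{h_0}\setminus(H\cup H_b)\bigr)$: on $H$ (disjoint from $H_b$) we have $\chi_H=1$ and $|q-1|\le F_n$ by Lemma \ref{approx}, so $1-F_n\le q^p\le 1$ (using $0\le q\le 1$ and $x^p\ge x$ on $[0,1]$) and hence $\bigl||T_nq|^p-|T_n|^p\bigr|\le F_n|T_n|^p\le F_n^p|T_n|^p$; on $B_{h_0}\setminus(H\cup H_b)$ we have $\chi_H=0$, so $|q|\le F_n$ and $|T_nq|^p\le F_n^p|T_n|^p$; on $H_b$ we have $0\le q\le 1$, so $|T_nq|^p\le|T_n|^p$. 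Integrating, bounding each of the resulting $|T_n|^p$-integrals over a subset of $E$ by $B(T_n,E)$ except for $\int_{H_b}|T_n|^p\omega_{\Gamma_E}=b(T_n,H_b)\,B(T_n,E)$, and adding the remainder estimate, I obtain $\bigl|\int_{B_{h_0}}|T_n^{*}|^p\omega_{\Gamma_E}-B(T_n,H)\bigr|\le\bigl(3F_n^p+b(T_n,H_b)\bigr)B(T_n,E)$; multiplying through by $2N$ and using the first symmetrization identity gives the assertion.

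The hard part will be the symmetrization step: one has to be sure that $T_n^{*}$ really is a polynomial of $U_N$ (so $|T_n^{*}|^p$ is fibre-constant), that \eqref{egyszerusites} holds uniformly across all $2N$ branches, and --- the genuinely delicate point --- one must keep track of which preimages $t_h$ can fall into $H\cup H_b$, which is precisely what hypothesis \eqref{eq:I} is for. Everything specific to the range $0<p<1$ is elementary and enters only through $\left(\sum x_h\right)^p\le\sum x_h^p$, $\bigl||a+b|^p-|a|^p\bigr|\le|b|^p$, $x^p\ge x$ on $[0,1]$ and $F_n\le F_n^p$, which take the place of the Minkowski- and Hölder-type inequalities used in the range $p\ge 1$.
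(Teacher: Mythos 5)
Your proof is correct, and it is built from the same ingredients as the paper's: the substitution identities \eqref{helyettesit1}, \eqref{helyettesit2} and \eqref{egyszerusites}, the bounds $|q(t_h)|\le F_n$ off $H\cup H_b$ and $|q-1|\le F_n$ on $H$ from Lemma \ref{approx}, property \eqref{eq:I}, and the subadditivity inequalities \eqref{ineq:1}--\eqref{ineq:2}. What you do differently is the bookkeeping. The paper does not invoke the fibre-constancy of $T_n^{*}$ in this lemma at all: it splits $E$ into the three preimage sets $E\setminus U_N^{-1}\bigl(U_N(H\cup H_b)\bigr)$, $U_N^{-1}\bigl(U_N(H_b)\bigr)$ and $U_N^{-1}\bigl(U_N(H)\bigr)$, expands $T_n^{*}$ by \eqref{ineq:2} on each piece, isolates the $h_0$-term on the last two, and pulls every summand back to the branches via \eqref{helyettesit1}. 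You instead use that $T_n^{*}$ is a polynomial of $U_N$ to collapse the whole integral to one branch, $B(T_n^{*},E)=2N\int_{B_{h_0}}|T_n^{*}(t)|^{p}\,\omega_{\Gamma_E}(e^{it})\,\mathrm{d}t$, and then split the single branch $B_{h_0}$ into $H$, $H_b$ and the remainder, treating the off-branch sheets as one exponentially small term $R$. This is an equivalent, arguably cleaner, reorganization; it arrives at exactly the same constant $2N\bigl(3F_n^{p}+b(T_n,H_b)\bigr)$, and your identification of \eqref{eq:I} as the hypothesis guaranteeing $t_h\notin H\cup H_b$ for $h\ne h_0$ (which holds for every $t\in B_{h_0}$ with $U_N(t)\in(-1,1)$, i.e.\ up to a null set, which is all the integrals require) is precisely how the paper uses it. One small simplification: on $H$ you could skip the preliminary reduction to $F_n<1$ and the chain $q^{p}\ge q\ge 1-F_n$, $F_n\le F_n^{p}$, by applying \eqref{ineq:1} directly, $\bigl||T_nq|^{p}-|T_n|^{p}\bigr|\le|T_n|^{p}|q-1|^{p}\le F_n^{p}|T_n|^{p}$, which is what the paper's third-term estimate does.
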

Analogously to $A\left(T_{n},X\right)$ and $B\left(T_{n},X\right)$
we use the notations $A_{\delta}\left(T_{n},X\right)$ and $B_{\delta}\left(T_{n},X\right)$
:

\[
A_{\delta}\left(T_{n},X\right)=\int_{X}\left|\frac{T_{n}'(t)}{n\,2\pi\,\omega_{\Gamma_{E\left(\delta\right)}}\left(e^{it}\right)}\right|^{p}\omega_{\Gamma_{E\left(\delta\right)}}\left(e^{it}\right)\,\mathrm{d}t,
\]
and
\[
B_{\delta}\left(T_{n},X\right)=\int_{X}\left|T_{n}(t)\right|^{p}\omega_{\Gamma_{E\left(\delta\right)}}\left(e^{it}\right)\,\mathrm{d}t
\]
respectively, where $E\left(\delta\right)$ comes from Lemma \ref{sulykonv}.

Next lemma corresponds to Proposition 3.5 from \cite{totikvarga}.
\begin{lem}
\label{lem:11} Let $0<p<1$. Assume that $H$ is an interval of second
type. Let $q=q(H,n;t)$ be the polynomial from Lemma \ref{approx}
and let $X$ be an arbitrary subset of $E.$ Then the following  estimates
hold:
\begin{multline}
\left|A\left(T_{n}q,H\right)-A\left(T_{n},H\right)\right|\\
\le\left(F_{n}^{p}+3^{p}\left(\frac{n^{2\theta}}{n}\right)^{p}\right)A\left(T_{n},E\right)+3^{p}\left(\frac{n^{2\theta}}{n}\right)^{p}B\left(T_{n},E\right),\label{eq:3}
\end{multline}
\begin{equation}
A\left(T_{n}q,X\right)\le A\left(T_{n},X\right)+3^{p}\left(\frac{n^{2\theta}}{n}\right)^{p}B\left(T_{n},E\right),\label{eq:3b}
\end{equation}
\begin{equation}
B\left(T_{n}q,X\right)\le B\left(T_{n},X\right).\label{eq:4}
\end{equation}
We also have, if $n$ is large, 
\begin{multline}
\left|A_{\delta}\left(T_{n}q,E\left(\delta\right)\right)-A_{\delta}\left(T_{n}q,H\right)\right|\le D_{\delta}^{1-p}a\left(T_{n},H_{b}\right)A\left(T_{n},E\right)\\
+\left(D_{\delta}^{1-p}F_{n}^{p/2}+D_{\delta}^{1-p}3^{p}\left(\frac{n^{2\theta}}{n}\right)^{p}+\left(\frac{n^{2\theta}}{n}\right)^{p}b\left(T_{n},H_{b}\right)\right)B\left(T_{n},E\right),\label{eq:5}
\end{multline}
and
\begin{equation}
\left|B_{\delta}\left(T_{n}q,E\left(\delta\right)\right)-B_{\delta}\left(T_{n}q,H\right)\right|\le D_{\delta}\left(F_{n}^{p}+b\left(T_{n},H_{b}\right)\right)B\left(T_{n},E\right).\label{eq:6}
\end{equation}

\end{lem}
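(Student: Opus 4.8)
The plan is to follow the proof of Proposition~3.5 in \cite{totikvarga} essentially verbatim, replacing every use of Minkowski's inequality by the two elementary facts valid for $0<p<1$: $|a+b|^{p}\le|a|^{p}+|b|^{p}$, and $\bigl||a|^{p}-|b|^{p}\bigr|\le|a-b|^{p}$ for $a,b\ge0$ (both from monotonicity and subadditivity of $x\mapsto x^{p}$ on $[0,\infty)$). About $q=q(H,n;t)$ I would use only: $(T_{n}q)'=T_{n}'q+T_{n}q'$ together with $0\le q\le1$ (\eqref{qnorm1}); $|q-\chi_{H}|\le F_{n}$ and $|q'|\le F_{n}$ off $H_{b}$ (\eqref{approx1}); and, for a.e.\ $t\in E$, Lukashov's bound \eqref{Luk} applied to $q$, which has degree $\le3n^{2\theta}$, i.e.\ $|q'(t)|\le3n^{2\theta}\cdot2\pi\omega_{\Gamma_{E}}(e^{it})$, and likewise with $E(\delta)$ in place of $E$. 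The role of this last bound is that its factor $2\pi\omega$ cancels the one in the definition of $A$, so that $\int_{X}\bigl|T_{n}q'/(n\,2\pi\omega_{\Gamma_{E}})\bigr|^{p}\omega_{\Gamma_{E}}\,\mathrm{d}t\le3^{p}(n^{2\theta}/n)^{p}B(T_{n},X)$; this is the source of every factor $3^{p}(n^{2\theta}/n)^{p}$ in the statement. Given this, \eqref{eq:4} is immediate from $|T_{n}q|^{p}\le|T_{n}|^{p}$; \eqref{eq:3b} follows by splitting $(T_{n}q)'$ and bounding the $T_{n}'q$ term by $A(T_{n},X)$ (since $q\le1$) and the $T_{n}q'$ term as above; and \eqref{eq:3} follows from the pointwise inequality $\bigl||(T_{n}q)'|^{p}-|T_{n}'|^{p}\bigr|\le F_{n}^{p}|T_{n}'|^{p}+|T_{n}q'|^{p}$ on $H$ (which lies off $H_{b}$, so $|q-1|\le F_{n}$ there), divided by $(n\,2\pi\omega_{\Gamma_{E}})^{p}$ and integrated against $\omega_{\Gamma_{E}}$ over $H$; this already yields the claim (in fact, slightly more).

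For the two $\delta$-estimates I would first fix $\delta>0$ and note, via Lemma~\ref{sulykonv} and \eqref{eq:III}, that $E(\delta)\subset E$ and that a second-type $H$ shrinks, as $n\to\infty$, onto the single inner extremal point it contains, which lies in the interior of the set \eqref{set:core}; hence for all large $n$ one has $H\cup H_{b}\subset E(\delta)$ and $H\cup H_{b}$ is contained in the set \eqref{set:core}, so \eqref{w/w} holds on $H\cup H_{b}$, and since $1-p>0$ also $\omega_{\Gamma_{E(\delta)}}^{1-p}\le D_{\delta}^{1-p}\omega_{\Gamma_{E}}^{1-p}$ there. Then $A_{\delta}(T_{n}q,E(\delta))-A_{\delta}(T_{n}q,H)=A_{\delta}(T_{n}q,E(\delta)\setminus H)$ (nonnegative), and I split $E(\delta)\setminus H$ into $H_{b}$ and $\bigl(E\setminus(H\cup H_{b})\bigr)\cap E(\delta)$. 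On $H_{b}$, using $q\le1$, the weight comparisons above, and \eqref{Luk} for $q'$ on $E(\delta)$, the $T_{n}'q$ part is $\le D_{\delta}^{1-p}a(T_{n},H_{b})A(T_{n},E)$ and the $T_{n}q'$ part is a bounded multiple of $(n^{2\theta}/n)^{p}b(T_{n},H_{b})B(T_{n},E)$; by \eqref{eq:II-a}, \eqref{eq:II-b}, \eqref{est:fp_becsles} and \eqref{eq:uj_felt} these are of the orders appearing in \eqref{eq:5}. Estimate \eqref{eq:6} is the same computation without derivatives, using $q\le1$ on $H_{b}$ and $|q|\le F_{n}$ off $H_{b}$.

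The part I expect to be the real obstacle is the contribution of $\bigl(E\setminus(H\cup H_{b})\bigr)\cap E(\delta)$ near the endpoints of $E(\delta)$; this is the one place where $0<p<1$ genuinely differs from the case $p\ge1$. There $q$ and $q'$ are exponentially small (we are off $H_{b}$), but $\omega_{\Gamma_{E(\delta)}}$ blows up, and, whereas for $p\ge1$ the exponent $1-p\le0$ keeps $\omega_{\Gamma_{E(\delta)}}^{1-p}$ bounded, for $p<1$ we have $\omega_{\Gamma_{E(\delta)}}^{1-p}\to\infty$, so the terms $F_{n}^{p}\!\int|T_{n}|^{p}\omega_{\Gamma_{E(\delta)}}^{1-p}$ and $F_{n}^{p}\!\int\bigl|T_{n}q'/(n\,2\pi\omega_{\Gamma_{E(\delta)}})\bigr|^{p}\omega_{\Gamma_{E(\delta)}}$ cannot be compared to $B(T_{n},E)$ by a weight inequality alone. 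I would dispatch these crudely: since $1-p<1$ the $1/\sqrt{\cdot}$ endpoint singularity is integrable, so $\int_{E(\delta)}\omega_{\Gamma_{E(\delta)}}^{1-p}\,\mathrm{d}t<\infty$; bounding $|T_{n}|$ and $|T_{n}'|$ by $\|T_{n}\|_{E}$ (the latter via \eqref{Luk}) and then $\|T_{n}\|_{E}^{p}$ by a polynomial-in-$n$ multiple of $B(T_{n},E)$ through the Nikolskii-type Lemma~\ref{also}, the exponentially small $F_{n}^{p}$ absorbs the polynomial loss and leaves a term of size $F_{n}^{p/2}B(T_{n},E)$ --- which is precisely why \eqref{eq:5} carries $F_{n}^{p/2}$ rather than $F_{n}^{p}$. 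Assumption \eqref{eq:uj_felt} is used here only to guarantee that the $q'$-errors $(n^{2\theta}/n)^{p}=n^{-(1-2\theta)p}$ are $\le n^{-\gamma}$, so that, together with \eqref{est:fp_becsles}, every error term in \eqref{eq:3}--\eqref{eq:6} is $o(1)$ at the uniform rate $n^{-\gamma}$ that the rest of Section~\ref{sec:arestov-case} relies on. Collecting the bounds over the three regions then gives \eqref{eq:5} and \eqref{eq:6}.
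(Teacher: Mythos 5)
Your proposal follows the paper's own proof of Lemma \ref{lem:11} in all essentials: the same splitting $(T_{n}q)'=T_{n}'q+T_{n}q'$ handled with \eqref{ineq:1}--\eqref{ineq:2}, Lukashov's inequality \eqref{Luk} for the $q'$-contribution, the weight comparison \eqref{w/w} on $H\cup H_{b}$ (which sits inside the set \eqref{set:core} for large $n$ because $H$ is of second type), the decomposition of $E(\delta)\setminus H$ into $H_{b}$ and the far region, and Lemma \ref{also} to convert the exponentially small far-region contribution into the $F_{n}^{p/2}B\left(T_{n},E\right)$ term; your explanation of why \eqref{eq:5} carries $F_{n}^{p/2}$ rather than $F_{n}^{p}$ is exactly the paper's mechanism.

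The one concrete slip is in \eqref{eq:3}: the quantity $A\left(T_{n}q,H\right)$ is normalized by $\deg\left(T_{n}q\right)=n+\deg q$, not by $n$, so your pointwise inequality divided by $\left(n\,2\pi\omega_{\Gamma_{E}}\right)^{p}$ controls $\bigl|\int_{H}\left|\left(T_{n}q\right)'/\left(n2\pi\omega_{\Gamma_{E}}\right)\right|^{p}\omega_{\Gamma_{E}}\,\mathrm{d}t-A\left(T_{n},H\right)\bigr|$ and not $\left|A\left(T_{n}q,H\right)-A\left(T_{n},H\right)\right|$. The renormalization costs an extra error of size $1-\left(n/\deg\left(T_{n}q\right)\right)^{p}\le\deg q/n\le3n^{2\theta-1}$ times the modified integral, and this is precisely what the term $3^{p}\left(n^{2\theta}/n\right)^{p}A\left(T_{n},E\right)$ in \eqref{eq:3} is there to absorb (in the paper it enters as $\left|n/(n+3n^{2\theta})-1\right|^{p}$); so \eqref{eq:3} does follow after this routine repair, but your parenthetical claim that you obtain ``slightly more'', i.e.\ \eqref{eq:3} without the $A$-error term, is not correct under the paper's normalization. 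Two harmless remarks: running your crude Nikolskii treatment of the far region also on \eqref{eq:6} yields $F_{n}^{p/2}$ where the statement has $F_{n}^{p}$, which is weaker but amply sufficient for the way \eqref{eq:6} is used in Subsection \ref{sub:second_case}; and your ``bounded multiple'' of $\left(n^{2\theta}/n\right)^{p}b\left(T_{n},H_{b}\right)$ in \eqref{eq:5} is in fact what the paper's own computation produces (it derives $D_{\delta}^{1-p}3^{p}\left(n^{2\theta}/n\right)^{p}b\left(T_{n},H_{b}\right)$ before stating the lemma with constant $1$), so you are no worse off than the original there.
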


\subsection{Proofs of the Lemmas \ref{lem:7}, \ref{lem:8} and \ref{lem:11} }

\subsubsection{Proofs of the Lemmas }

Interestingly, the proofs of Lemmas \ref{lem:7} and \ref{lem:8}
are somewhat simpler than those of in Nagy-Toókos \cite{nagytookos}
and Totik-Varga \cite{totikvarga}. This ,,simplicity'' derives from
the following inequalities: 
\begin{equation}
\bigl||a|^{p}-|b|^{p}\bigr|\le|a-b|^{p}\label{ineq:1}
\end{equation}
\begin{equation}
|a+b|^{p}\le|a|^{p}+|b|^{p},\label{ineq:2}
\end{equation}
where $a,b\in\mathbb{R}$ and $0<p<1$.

During the verification of Lemmas \ref{lem:7} and \ref{lem:8} we
frequently use the subsequent identities (cf. formulas (61), (62)
in \cite{nagytookos}). Let $X$ be a subset of the branch $B_{h_{0}}$
and let $X_{h}$ denote the set $U_{N,h}^{-1}\bigl(U_{N}(X)\bigr)$.
Then

\begin{equation}
\int_{X}f\left(t_{h}\right)\omega_{\Gamma_{E}}\left(e^{it}\right)\,\mathrm{d}t=\int_{X_{h}}f\left(t\right)\omega_{\Gamma_{E}}\left(e^{it}\right)\,\mathrm{d}t,\label{helyettesit1}
\end{equation}
and
\begin{equation}
\int_{X}\left(\frac{\left(f\left(t_{h}\right)\right)'}{\omega_{\Gamma_{E}}\left(e^{it}\right)}\right)^{p}\omega_{\Gamma_{E}}\left(e^{it}\right)\,\mathrm{d}t=\int_{X_{h}}\left(\frac{\left(f\left(t\right)\right)'}{\omega_{\Gamma_{E}}\left(e^{it}\right)}\right)^{p}\omega_{\Gamma_{E}}\left(e^{it}\right)\,\mathrm{d}t\label{helyettesit2}
\end{equation}
respectively.
\begin{proof}
[Proof of Lemma \ref{lem:7}] 

\begin{multline}
\left|\left(\frac{n^{*}}{n}\right)^{p}A(T_{n}^{*},E)-2NA(T_{n},H)\right|\\
\le\left|\left(\frac{n^{*}}{n}\right)^{p}A\Bigl(T_{n}^{*},E\setminus U_{N}^{-1}\bigl(U_{N}(H\cup H_{b})\bigr)\Bigr)\right|+\left|\left(\frac{n^{*}}{n}\right)^{p}A\Bigl(T_{n}^{*},U_{N}^{-1}\bigl(U_{N}(H_{b})\bigr)\Bigr)\right|\\
+\left|\left(\frac{n^{*}}{n}\right)^{p}A\Bigl(T_{n}^{*},U_{N}^{-1}\bigl(U_{N}(H)\bigr)\Bigr)-2NA(T_{n},H)\right|\label{lem:7Min}
\end{multline}
We separately estimate the terms of the right hand side. We begin
with the first term. By \eqref{ineq:2}, we get

\begin{multline}
\left(\frac{n^{*}}{n}\right)^{p}A\Bigl(T_{n}^{*},E\setminus U_{N}^{-1}\bigl(U_{N}(H\cup H_{b})\bigr)\Bigr)\\
\le\sum_{h=1}^{2N}\int_{E\setminus U_{N}^{-1}\bigl(U_{N}(H\cup H_{b})\bigr)}\left|\frac{\bigl(T_{n}(t_{h})\bigr)'q(t_{h})}{n2\pi\omega_{\Gamma_{E}}(e^{it})}\right|^{p}\omega_{\Gamma_{E}}(e^{it})\,\mathrm{d}t\\
+\sum_{h=1}^{2N}\int_{E\setminus U_{N}^{-1}\bigl(U_{N}(H\cup H_{b})\bigr)}\left|\frac{T_{n}(t_{h})\bigl(q(t_{h})\bigr)'}{n2\pi\omega_{\Gamma_{E}}(e^{it})}\right|^{p}\omega_{\Gamma_{E}}(e^{it})\,\mathrm{d}t.\label{eq:a_tavol_elso_lepes}
\end{multline}
On $E\setminus U_{N}^{-1}\bigl(U_{N}(H\cup H_{b})\bigr)$ we know
that $t_{h}\notin H\cup H_{b}$, therefore by \eqref{approx1}, $\left|q\left(t_{h}\right)\right|\le\conxvi e^{-\coni n^{\theta}}=F_{n}.$
This fact is applied to the first term of the right hand side of \eqref{eq:a_tavol_elso_lepes}.
As regards the second term, we use Lukashov's inequality \eqref{Luk}
and \eqref{egyszerusites}. Then increasing the domain of integration
$E\setminus U_{N}^{-1}\bigl(U_{N}(H\cup H_{b})\bigr)$ to $E=\cup_{j=1}^{2N}B_{j}$
the previous inequality is continued as

\begin{multline*}
\le F_{n}^{p}\sum_{h=1}^{2N}\sum_{j=1}^{2N}\int_{B_{j}}\left|\frac{\left(T_{n}\left(t_{h}\right)\right)'}{n2\pi\omega_{\Gamma_{E}}\left(e^{it}\right)}\right|^{p}\omega_{\Gamma_{E}}\left(e^{it}\right)\,\mathrm{d}t\\
+\left(\frac{\deg(q)}{n}\right)^{p}\sum_{h=1}^{2N}\sum_{j=1}^{2N}\int_{B_{j}}\left|T_{n}\left(t_{h}\right)\right|^{p}\omega_{\Gamma_{E}}\left(e^{it}\right)\,\mathrm{d}t.
\end{multline*}
By \eqref{helyettesit1} and \eqref{helyettesit2} we get

\begin{multline*}
\le2NF_{n}^{p}\sum_{h=1}^{2N}\int_{B_{h}}\left|\frac{\left(T_{n}\left(t\right)\right)'}{n2\pi\omega_{\Gamma_{E}}\left(e^{it}\right)}\right|^{p}\omega_{\Gamma_{E}}\left(e^{it}\right)\,\mathrm{d}t\\
+2N\left(\frac{\deg(q)}{n}\right)^{p}\sum_{h=1}^{2N}\int_{B_{h}}\left|T_{n}(t)\right|^{p}\omega_{\Gamma_{E}}\left(e^{it}\right)\,\mathrm{d}t\\
=o(1)A(T_{n},E)+o(1)B(T_{n},E),
\end{multline*}
remembering the facts that $F_{n}=o(1)$ and $\deg\left(q\right)/n=o\left(1\right)$.
That is we have
\begin{multline}
\left(\frac{n^{*}}{n}\right)^{p}A\left(T_{n}^{*},E\setminus U_{N}^{-1}\left[U_{N}\left(H\cup H_{b}\right)\right]\right)\\
\le\left(2N\right)F_{n}^{p}A\left(T_{n},E\right)+\left(2N\right)\left(\frac{3n^{2\theta}}{n}\right)^{p}B\left(T_{n},E\right)\label{eq:lem8_est1}
\end{multline}
or, without detailing the error terms, we can write

\begin{equation}
\left(\frac{n^{*}}{n}\right)^{p}A\left(T_{n}^{*},E\setminus U_{N}^{-1}\left[U_{N}\left(H\cup H_{b}\right)\right]\right)\le o\left(1\right)A\left(T_{n},E\right)+o\left(1\right)B\left(T_{n},E\right).\label{first1}
\end{equation}

Now we turn to the second term of the right hand side of \eqref{lem:7Min}.
Denote by $h_{0}$ the index of the branch which $(H\cup H_{b})\cap E$
belongs to (such index exists because of the property \eqref{eq:I}).
By \eqref{ineq:2} we obtain
\begin{multline}
\left(\frac{n^{*}}{n}\right)^{p}A\left(T_{n}^{*},U_{N}^{-1}\left[U_{N}\left(H_{b}\right)\right]\right)\\
\le\int_{U_{N}^{-1}\bigl(U_{N}(H_{b})\bigr)}\left|\frac{\bigl(T_{n}(t_{h_{0}})\bigr)'q(t_{h_{0}})}{n2\pi\omega_{\Gamma_{E}}(e^{it})}\right|^{p}\omega_{\Gamma_{E}}(e^{it})\,\mathrm{d}t\\
+\sum_{h\not=h_{0}}\int_{U_{N}^{-1}\bigl(U_{N}(H_{b})\bigr)}\left|\frac{\bigl(T_{n}(t_{h})\bigr)'q(t_{h})}{n2\pi\omega_{\Gamma_{E}}(e^{it})}\right|^{p}\omega_{\Gamma_{E}}(e^{it})\,\mathrm{d}t\\
+\int_{U_{N}^{-1}\bigl(U_{N}(H_{b})\bigr)}\left|\frac{T_{n}(t_{h_{0}})\bigl(q(t_{h_{0}})\bigr)'}{n2\pi\omega_{\Gamma_{E}}(e^{it})}\right|^{p}\omega_{\Gamma_{E}}(e^{it})\,\mathrm{d}t\\
+\sum_{h\ne h_{0}}\int_{U_{N}^{-1}\bigl(U_{N}(H_{b})\bigr)}\left|\frac{T_{n}(t_{h})\bigl(q(t_{h})\bigr)'}{n2\pi\omega_{\Gamma_{E}}(e^{it})}\right|^{p}\omega_{\Gamma_{E}}(e^{it})\,\mathrm{d}t.\label{second1}
\end{multline}
The estimations of the second and the fourth term of the right hand
side of \eqref{second1} follow that of the first term at the right
hand side of \eqref{lem:7Min}. If $h\not=h_{0}$ then \eqref{approx1}
again implies that $\left|q\left(t_{h}\right)\right|\le F_{n}$ on
the set $U_{N}^{-1}\left[U_{N}\left(H_{b}\right)\right]$, therefore,
for these two terms, the subsequent inequality holds:
\begin{multline}
\sum_{h\not=h_{0}}\int_{U_{N}^{-1}\bigl(U_{N}(H_{b})\bigr)}\left|\frac{\bigl(T_{n}(t_{h})\bigr)'q(t_{h})}{n2\pi\omega_{\Gamma_{E}}(e^{it})}\right|^{p}\omega_{\Gamma_{E}}(e^{it})\,\mathrm{d}t\\
+\sum_{h\not=h_{0}}\int_{U_{N}^{-1}\bigl(U_{N}(H_{b})\bigr)}\left|\frac{T_{n}(t_{h})\bigl(q(t_{h})\bigr)'}{n2\pi\omega_{\Gamma_{E}}(e^{it})}\right|^{p}\omega_{\Gamma_{E}}(e^{it})\,\mathrm{d}t\\
\le2NF_{n}^{p}\sum_{h\not\ne h_{0}}\int_{B_{h}}\left|\frac{\bigl(T_{n}(t)\bigr)'}{n2\pi\omega_{\Gamma_{E}}(e^{it})}\right|^{p}\omega_{\Gamma_{E}}(e^{it})\,\mathrm{d}t\\
+2N\left(\frac{\deg(q)}{n}\right)^{p}\sum_{h\not=h_{0}}\int_{B_{h}}\left|T_{n}(t)\right|^{p}\omega_{\Gamma_{E}}(e^{it})\,\mathrm{d}t\\
\le o(1)A(T_{n},E)+o(1)B(T_{n},E).\label{second1.1}
\end{multline}
For the first term of the right hand side of \eqref{second1} we get,
by $\left\Vert q\right\Vert _{E}\le1$ and by \eqref{helyettesit2},
that
\begin{multline*}
\int_{U_{N}^{-1}\bigl(U_{N}(H_{b})\bigr)}\left|\frac{\left(T_{n}\left(t_{h_{0}}\right)\right)'q\left(t_{h_{0}}\right)}{n2\pi\omega_{\Gamma_{E}}(e^{it})}\right|^{p}\omega_{\Gamma_{E}}(e^{it})\,\mathrm{d}t\\
\le\sum_{j=1}^{2N}\int_{\left((H_{b})_{j}\right)_{h_{0}}}\left|\frac{T_{n}'(t)}{n2\pi\omega_{\Gamma_{E}}(e^{it})}\right|^{p}\omega_{\Gamma_{E}}(e^{it})\,\mathrm{d}t.
\end{multline*}
Since $(H_{b})_{h_{0}}=H_{b}$ (where, by notational conventions,
$\left(H_{b}\right)_{h_{0}}=U_{N,h_{0}}^{-1}\left[U_{N}\left(H_{b}\right)\right]$),
and $\left((H_{b})_{j}\right)_{h_{0}}=\left(H_{b}\right)_{h_{0}}=H_{b}$
, we can continue as
\begin{equation}
=2NA(T_{n},H_{b})=2Na(T_{n},H_{b})A(T_{n},E).\label{second1.2}
\end{equation}
In order to estimate the third term of the right hand side of \eqref{second1}
we use Lukashov's inequality \eqref{Luk}, \eqref{helyettesit1} and
\eqref{egyszerusites}. Then 
\begin{multline*}
\int_{U_{N}^{-1}\bigl(U_{N}(H_{b})\bigr)}\left|\frac{T_{n}\left(t_{h_{0}}\right)\left(q\left(t_{h_{0}}\right)\right)'}{n2\pi\omega_{\Gamma_{E}}(e^{it})}\right|^{p}\omega_{\Gamma_{E}}(e^{it})\,\mathrm{d}t\\
\le\sum_{j=1}^{2N}\int_{(H_{b})_{j}}\left|\frac{T_{n}(t_{h_{0}})\deg(q)2\pi\omega_{\Gamma_{E}}(e^{it_{h_{0}}})\frac{\mathrm{d}}{\mathrm{d}t}t_{h_{0}}}{n2\pi\omega_{\Gamma_{E}}(e^{it})}\right|^{p}\omega_{\Gamma_{E}}(e^{it})\,\mathrm{d}t\\
=2N\left(\frac{\deg(q)}{n}\right)^{p}\int_{H_{b}}\left|T_{n}(t)\right|^{p}\omega_{\Gamma_{E}}(e^{it})\,\mathrm{d}t\le o\left(1\right)B\left(T_{n},E\right).
\end{multline*}
This, together with \eqref{second1}, \eqref{second1.1} and \eqref{second1.2},
implies that 
\begin{multline}
\left(\frac{n^{*}}{n}\right)^{p}A\left(T_{n}^{*},U_{N}^{-1}\left[U_{N}\left(H_{b}\right)\right]\right)\\
\le\left(\left(2N\right)F_{n}^{p}+2Na\left(T_{n},H_{b}\right)A\left(T_{n},E\right)\right)+2\left(2N\right)\left(\frac{3n^{2\theta}}{n}\right)^{p}B\left(T_{n},E\right)\label{eq:lem8_est2}
\end{multline}
or, without detailing the error terms, we can write

\begin{equation}
\left(\frac{n^{*}}{n}\right)^{p}A\left(T_{n}^{*},U_{N}^{-1}\left[U_{N}\left(H_{b}\right)\right]\right)\le\left(o\left(1\right)+2Na\left(T_{n},H_{b}\right)\right)A\left(T_{n},E\right)+o\left(1\right)B\left(T_{n},E\right),\label{second2}
\end{equation}
where $o(1)$ is independent of $T_{n}$.

As regards the third term of the right hand side of the inequality
\eqref{lem:7Min}, consider, by \eqref{helyettesit2}, that

\[
2NA(T_{n},H)=\int_{U_{N}^{-1}\bigl(U_{N}(H)\bigr)}\left|\frac{T_{n}'(t_{h_{0}})}{n2\pi\omega_{\Gamma_{E}}(e^{it})}\right|^{p}\omega_{\Gamma_{E}}(e^{it})\,\mathrm{d}t,
\]
so, by \eqref{ineq:1}, then by \eqref{ineq:2}, we obtain

\begin{multline}
\left|\left(\frac{n^{*}}{n}\right)^{p}A\Bigl(T_{n}^{*},U_{N}^{-1}\bigl(U_{N}(H)\bigr)\Bigr)-2NA(T_{n},H)\right|\\
\le\int_{U_{N}^{-1}\bigl(U_{N}(H)\bigr)}\left|\frac{\left(T_{n}^{*}\left(t\right)\right)'-\left(T_{n}\left(t_{h_{0}}\right)\right)'}{n2\pi\omega_{\Gamma_{E}}(e^{it})}\right|^{p}\omega_{\Gamma_{E}}(e^{it})\,\mathrm{d}t\\
\le\sum_{h\not=h_{0}}\int_{U_{N}^{-1}\bigl(U_{N}(H)\bigr)}\left|\frac{\left(T_{n}\left(t_{h}\right)\right)'q\left(t_{h}\right)}{n2\pi\omega_{\Gamma_{E}}(e^{it})}\right|^{p}\omega_{\Gamma_{E}}(e^{it})\,\mathrm{d}t\\
+\sum_{h=1}^{2N}\int_{U_{N}^{-1}\bigl(U_{N}(H)\bigr)}\left|\frac{T_{n}\left(t_{h}\right)\left(q\left(t_{h}\right)\right)'}{n2\pi\omega_{\Gamma_{E}}(e^{it})}\right|^{p}\omega_{\Gamma_{E}}(e^{it})\,\mathrm{d}t\\
+\int_{U_{N}^{-1}\bigl(U_{N}(H)\bigr)}\left|\frac{\left(T_{n}\left(t_{h_{0}}\right)\right)'q\left(t_{h_{0}}\right)-\left(T_{n}\left(t_{h_{0}}\right)\right)'}{n2\pi\omega_{\Gamma_{E}}(e^{it})}\right|^{p}\omega_{\Gamma_{E}}(e^{it})\,\mathrm{d}t.\label{third1}
\end{multline}
For the first term of the right hand side we again use that, by \eqref{approx1},
$\left|q(t_{h})\right|\le F_{n}$ on the set $U_{N}^{-1}\left[U_{N}\left(H\right)\right]$
for $h\ne h_{0}.$ Hence, considering \eqref{helyettesit2}, we obtain
\begin{multline}
\sum_{h\not=h_{0}}\int_{U_{N}^{-1}\bigl(U_{N}(H)\bigr)}\left|\frac{\left(T_{n}\left(t_{h}\right)\right)'q\left(t_{h}\right)}{n2\pi\omega_{\Gamma_{E}}(e^{it})}\right|^{p}\omega_{\Gamma_{E}}(e^{it})\,\mathrm{d}t\\
\le F_{n}^{p}\sum_{h\ne h_{0}}\sum_{j=1}^{2N}\int_{\left(\left(H\right)_{j}\right)_{h}}\left|\frac{T_{n}'(t)}{n2\pi\omega_{\Gamma_{E}}(e^{it})}\right|^{p}\omega_{\Gamma_{E}}(e^{it})\,\mathrm{d}t\\
=F_{n}^{p}2N\int_{\cup_{h\ne h_{0}}\left(H\right)_{h}}\left|\frac{T_{n}'(t)}{n2\pi\omega_{\Gamma_{E}}(e^{it})}\right|^{p}\omega_{\Gamma_{E}}\left(e^{it}\right)\,\mathrm{d}t\\
\le\left(2N\right)F_{n}^{p}A\left(T_{n},E\right)=o\left(1\right)A\left(T_{n},E\right)\label{third1.1}
\end{multline}
where $\left(\left(H\right)_{j}\right)_{h}=U_{N,h}^{-1}\left[U\left(U_{N,j}^{-1}\left[U_{N}\left(H\right)\right]\right)\right]=\left(H\right)_{h}$.
The second term of the right hand side of \eqref{third1} is estimated
by the help of Lukashov's inequality \eqref{Luk}. We get
\begin{multline*}
\sum_{h=1}^{2N}\int_{U_{N}^{-1}\bigl(U_{N}(H)\bigr)}\left|\frac{T_{n}\left(t_{h}\right)\left(q\left(t_{h}\right)\right)'}{n2\pi\omega_{\Gamma_{E}}(e^{it})}\right|^{p}\omega_{\Gamma_{E}}(e^{it})\,\mathrm{d}t\\
\le\sum_{h=1}^{2N}\int_{U_{N}^{-1}\bigl(U_{N}(H)\bigr)}\left|\frac{T_{n}\left(t_{h}\right)\deg\left(q\right)2\pi q\left(t_{h}\right)\omega_{\Gamma_{E}}\left(e^{it_{h}}\right)\frac{\mathrm{d}}{\mathrm{d}t}t_{h}}{n2\pi\omega_{\Gamma_{E}}(e^{it})}\right|^{p}\omega_{\Gamma_{E}}(e^{it})\,\mathrm{d}t
\end{multline*}
This inequality is continued by \eqref{egyszerusites} and \eqref{helyettesit1}
as

\begin{multline}
\le\left(\frac{\deg\left(q\right)}{n}\right)^{p}\sum_{h=1}^{2N}\sum_{j=1}^{2N}\int_{\left(\left(H\right)_{j}\right)_{h}}\left|T_{n}(t)\right|^{p}\omega_{\Gamma_{E}}(e^{it})\,\mathrm{d}t\\
=\left(\frac{\deg\left(q\right)}{n}\right)^{p}2N\int_{\cup_{h=1}^{2N}\left(H\right)_{h}}\left|T_{n}\left(t\right)\right|^{p}\omega_{\Gamma_{E}}\left(e^{it}\right)\,\mathrm{d}t\\
\le\left(\frac{3n^{2\theta}}{n}\right)^{p}\left(2N\right)B\left(T_{n},E\right)=o\left(1\right)B\left(T_{n},E\right).\label{third1.2}
\end{multline}
By \eqref{approx1} we have on the set $U_{N}^{-1}\left[U_{N}\left(H\right)\right]$
that $\left|q\left(t_{h_{0}}\right)-1\right|\le F_{n}$. Hence the
third term of the right hand side of the inequality \eqref{third1}
can be estimated as
\begin{multline*}
\int_{U_{N}^{-1}\bigl(U_{N}(H)\bigr)}\left|\frac{\left(T_{n}\left(t_{h_{0}}\right)\right)'q\left(t_{h_{0}}\right)-\left(T_{n}\left(t_{h_{0}}\right)\right)'}{n2\pi\omega_{\Gamma_{E}}(e^{it})}\right|^{p}\omega_{\Gamma_{E}}(e^{it})\,\mathrm{d}t\\
=\int_{U_{N}^{-1}\bigl(U_{N}(H)\bigr)}\left|\frac{\left(T_{n}\left(t_{h_{0}}\right)\right)'}{n2\pi\omega_{\Gamma_{E}}(e^{it})}\right|^{p}\left|q\left(t_{h_{0}}\right)-1\right|^{p}\omega_{\Gamma_{E}}(e^{it})\,\mathrm{d}t\\
\le F_{n}^{p}\int_{U_{N}^{-1}\bigl(U_{N}(H)\bigr)}\left|\frac{\left(T_{n}\left(t_{h_{0}}\right)\right)'}{n2\pi\omega_{\Gamma_{E}}(e^{it})}\right|^{p}\omega_{\Gamma_{E}}(e^{it})\,\mathrm{d}t.
\end{multline*}
By \eqref{helyettesit2} we continue the inequality as

\begin{equation}
=F_{n}^{p}2N\int_{H}\left|\frac{T_{n}'(t)}{n2\pi\omega_{\Gamma_{E}}(e^{it})}\right|^{p}\omega_{\Gamma_{E}}(e^{it})\,\mathrm{d}t\le o(1)A(T_{n},E).\label{third1.3}
\end{equation}
So by \eqref{third1}, \eqref{third1.1}, \eqref{third1.2} and \eqref{third1.3}
we get that
\begin{multline}
\left|\left(\frac{n^{*}}{n}\right)^{p}A\left(T_{n}^{*},U_{N}^{-1}\left[U_{N}\left(H\right)\right]\right)-2NA\left(T_{n},H\right)\right|\\
\le\left(2F_{n}^{p}2N\right)A\left(T_{n},E\right)+\left(\frac{3n^{2\theta}}{n}\right)^{p}\left(2N\right)B\left(T_{n},E\right).\label{third2}
\end{multline}
Now combining \eqref{lem:7Min}, \eqref{eq:lem8_est1}, \eqref{eq:lem8_est2}
and \eqref{third2}, we can write
\begin{multline*}
\left|\left(\frac{n^{*}}{n}\right)^{p}A\left(T_{n}^{*},E\right)-2NA\left(T_{n},H\right)\right|\\
\le\left(2N\right)\left(4F_{n}^{p}+a\left(T_{n},H_{b}\right)\right)A\left(T_{n},E\right)\\
+4\left(2N\right)\left(\frac{3n^{2\theta}}{n}\right)^{p}B\left(T_{n},E\right).
\end{multline*}
This way we proved the lemma. 
\end{proof}

\begin{proof}
[Proof of Lemma \ref{lem:8}] The proof is sketched briefly, because
it is very similar to that of Lemma \ref{lem:7}. We again split $E$
into three sets: $E\setminus U_{N}^{-1}\left[U_{N}\left(H\cup H_{b}\right)\right],$
$U_{N}^{-1}\left[U_{N}\left(H_{b}\right)\right]$ and $U_{N}^{-1}\left[U_{N}\left(H\right)\right]$
and start with the inequality 
\begin{multline}
\left|B\left(T_{n}^{*},E\right)-(2N)B\left(T_{n},H\right)\right|\\
\le\left|B\left(T_{n}^{*},E\setminus U_{N}^{-1}\left[U_{N}\left(H\cup H_{b}\right)\right]\right)\right|\\
+\left|B\left(T_{n}^{*},U_{N}^{-1}\left[U_{N}\left(H_{b}\right)\right]\right)\right|\\
+\left|B\left(T_{n}^{*},U_{N}^{-1}\left[U_{N}\left(H\right)\right]\right)-\left(2N\right)B\left(T_{n},H\right)\right|.\label{lem:8Min}
\end{multline}
Now, as before, we separately estimate the three terms at the right
hand side of the inequality.

In the case of the first term we first use \eqref{approx1} and \eqref{helyettesit1}
then $E\setminus U_{N}^{-1}\left[U_{N}\left(H\cup H_{b}\right)\right]$
is increased to $E$. We get that 
\begin{multline}
\left|B\left(T_{n}^{*},E\setminus U_{N}^{-1}\left[U_{N}\left(H\cup H_{b}\right)\right]\right)\right|\le F_{n}^{p}2N\int_{\cup_{h=1}^{2N}B_{h}}\left|T_{n}(t)\right|^{p}\omega_{\Gamma_{E}}(e^{it})\,\mathrm{d}t\\
\le o(1)B(T_{n},E).\label{lem:8first}
\end{multline}

In the case of the second and the third terms we again deal with the
$h_{0}$-th term of $T_{n}^{*}$ separately and apply \eqref{helyettesit1}
and \eqref{approx1}. Then, for the second term, we have
\begin{multline}
B\left(T_{n}^{*},U_{N}^{-1}\left[U_{N}\left(H_{b}\right)\right]\right)\\
\le F_{n}^{p}\sum_{h\ne h_{0}}\sum_{j=1}^{2N}\int_{\left(\left(H_{b}\right)_{j}\right)_{h}}\left|T_{n}(t)\right|^{p}\omega_{\Gamma_{E}}(e^{it})\,\mathrm{d}t\\
+2N\int_{H_{b}}\left|T_{n}(t)\right|^{p}\omega_{\Gamma_{E}}(e^{it})\,\mathrm{d}t\\
\le F_{n}^{p}2NB(T_{n},E)+2Nb(T_{n},H_{b})B(T_{n},E).\label{lem:8second}
\end{multline}
As regards the third term, we estimate similarly as in \eqref{third1}
and we use \eqref{ineq:1} and \eqref{ineq:2} here. This way we obtain
\begin{multline}
\left|B\left(T_{n}^{*},U_{N}^{-1}\left[U_{N}\left(H\right)\right]\right)-2NB\left(T_{n},H\right)\right|\\
\le F_{n}^{p}2N\sum_{h\ne h_{0}}\int_{\left(\left(H\right)_{j}\right)_{h}}\left|T_{n}(t)\right|^{p}\omega_{\Gamma_{E}}(e^{it})\,\mathrm{d}t\\
+2N\int_{H}\left|T_{n}(t)\right|^{p}\left|q\left(t\right)-1\right|^{p}\omega_{\Gamma_{E}}(e^{it})\,\mathrm{d}t\\
\le F_{n}^{p}2N\sum_{h=1}^{2N}\int_{\left(H\right)_{h}}\left|T_{n}\left(t\right)\right|^{p}\omega_{\Gamma_{E}}(e^{it})\,\mathrm{d}t\le F_{n}^{p}2NB(T_{n},E).\label{lem8:third}
\end{multline}
Using the inequalities \eqref{lem:8Min}, \eqref{lem:8first}, \eqref{lem:8second}
and \eqref{lem8:third}, we have
\[
\left|B\left(T_{n}^{*},E\right)-\left(2N\right)B\left(T_{n},H\right)\right|\le2N\left(3F_{n}^{p}+b\left(T_{n},H_{b}\right)\right)B\left(T_{n},E\right).
\]
This way we proved the lemma.
\end{proof}

\begin{proof}
[Proof of Lemma \ref{lem:11}] We begin with the verification of \eqref{eq:3}.
\eqref{ineq:1} and \eqref{ineq:2} imply that
\begin{multline*}
\left|\int_{H}\left|\frac{\left(T_{n}\left(t\right)q\left(t\right)\right)'}{\deg\left(T_{n}q\right)2\pi\omega_{\Gamma_{E}}(e^{it})}\right|^{p}\omega_{\Gamma_{E}}(e^{it})\, dt-\int_{H}\left|\frac{T_{n}'\left(t\right)}{n2\pi\omega_{\Gamma_{E}}(e^{it})}\right|^{p}\omega_{\Gamma_{E}}(e^{it})\,\mathrm{d\mathit{t}}\right|\\
\le\int_{H}\left|\frac{\frac{n}{\deg(T_{n}q)}T_{n}'(t)q(t)-T_{n}'(t)}{n2\pi\omega_{\Gamma_{E}}(e^{it})}\right|^{p}\omega_{\Gamma_{E}}(e^{it})\,\mathrm{d}t\\
+\int_{H}\left|\frac{T_{n}(t)q'(t)}{\deg(T_{n}q)2\pi\omega_{\Gamma_{E}}(e^{it})}\right|^{p}\omega_{\Gamma_{E}}(e^{it})\,\mathrm{d}t.
\end{multline*}
Using Lukashov's inequality \eqref{Luk} we obtain that

\begin{multline*}
\le\int_{H}\left|\frac{T_{n}'(t)}{n2\pi\omega_{\Gamma_{E}}(e^{it})}\left(\frac{n}{\deg\left(T_{n}q\right)}q\left(t\right)-1\right)\right|^{p}\omega_{\Gamma_{E}}(e^{it})\,\mathrm{d}t\\
+\int_{H}\left|\frac{T_{n}(t)\deg\left(q\right)2\pi\omega_{\Gamma_{E}}\left(e^{it}\right)\left\Vert q\right\Vert _{E}}{\deg\left(T_{n}q\right)2\pi\omega_{\Gamma_{E}}(e^{it})}\right|^{p}\omega_{\Gamma_{E}}(e^{it})\,\mathrm{d}t\\
\le A\left(T_{n},H\right)\left(F_{n}^{p}+\left|\frac{n}{n+3n^{2\theta}}-1\right|^{p}\right)+B\left(T_{n},H\right)\left(\frac{\deg q}{\deg\left(T_{n}q\right)}\right)^{p}\\
\le A\left(T_{n},E\right)\left(F_{n}^{p}+\left(\frac{3n^{2\theta}}{n}\right)^{p}\right)+B\left(T_{n},E\right)\left(\frac{3n^{2\theta}}{n}\right)^{p}.
\end{multline*}
This way we established \eqref{eq:3}.

For \eqref{eq:3b},

\begin{multline*}
A(T_{n}q,X)\le\int_{X}\left|\frac{T_{n}'(t)q(t)}{\deg\left(T_{n}q\right)2\pi\omega_{\Gamma_{E}}(e^{it})}\right|^{p}\omega_{\Gamma_{E}}(e^{it})\,\mathrm{d}t\\
+\int_{X}\left|\frac{T_{n}\left(t\right)q'\left(t\right)}{\deg\left(T_{n}q\right)2\pi\omega_{\Gamma_{E}}(e^{it})}\right|^{p}\omega_{\Gamma_{E}}(e^{it})\,\mathrm{d}t
\end{multline*}
Apply \eqref{qnorm1} to the first integral and Lukashov's inequality
\eqref{Luk} to the second one to continue the inequality as

\begin{multline*}
\le\left(\frac{n}{\deg\left(T_{n}q\right)}\right)^{p}A\left(T_{n},X\right)+\left(\frac{\deg\left(q\right)}{\deg\left(T_{n}q\right)}\right)^{p}B(T_{n},X)\\
\le A\left(T_{n},X\right)+\left(\frac{3n^{2\theta}}{n}\right)^{p}B\left(T_{n},X\right)
\end{multline*}
where we again use \eqref{eq:uj_felt}. This proves \eqref{eq:3b}.

\eqref{eq:4} is immediate consequence of \eqref{qnorm1}.

Before the verification of \eqref{eq:5} and \eqref{eq:6}, recall
that $H$ is an interval of the second type. Hence, for sufficiently
large $n$, $H$ is subset of the union in \eqref{set:core}. This
comes from property \eqref{eq:III} and that $H$ contains precisely
one inner extremal point. For \eqref{eq:5}, we have

\[
\left|A_{\delta}\left(T_{n}q,E\left(\delta\right)\right)-A_{\delta}\left(T_{n}q,H\right)\right|=A_{\delta}\left(T_{n}q,H_{b}\right)+A_{\delta}\left(T_{n}q,E\left(\delta\right)\setminus\left(H\cup H_{b}\right)\right).
\]
First we estimate $A_{\delta}\left(T_{n}q,H_{b}\right)$. \eqref{ineq:2}
shows that

\begin{multline*}
A_{\delta}\left(T_{n}q,H_{b}\right)=\int_{H_{b}}\left|\frac{\left(T_{n}\left(t\right)q\left(t\right)\right)'}{\deg\left(T_{n}q\right)2\pi\omega_{\Gamma_{E\left(\delta\right)}}(e^{it})}\right|^{p}\omega_{\Gamma_{E\left(\delta\right)}}(e^{it})\,\mathrm{d}t\\
\le\int_{H_{b}}\left|\frac{T_{n}'\left(t\right)q\left(t\right)}{\deg\left(T_{n}q\right)2\pi\omega_{\Gamma_{E\left(\delta\right)}}(e^{it})}\right|^{p}\omega_{\Gamma_{E\left(\delta\right)}}(e^{it})\,\mathrm{d}t\\
+\int_{H_{b}}\left|\frac{T_{n}\left(t\right)q'\left(t\right)}{\deg\left(T_{n}q\right)2\pi\omega_{\Gamma_{E\left(\delta\right)}}(e^{it})}\right|^{p}\omega_{\Gamma_{E\left(\delta\right)}}(e^{it})\,\mathrm{d}t
\end{multline*}

First employ \eqref{w/w} to both integrals to replace $\omega_{\Gamma_{E\left(\delta\right)}}$
with $\omega_{\Gamma_{E}}$ and then we apply Lukashov's inequality
\eqref{Luk}. Then the inequality is continued as

\begin{multline*}
\le D_{\delta}^{1-p}\int_{H_{b}}\left|\frac{T_{n}'\left(t\right)q\left(t\right)}{\deg\left(T_{n}q\right)2\pi\omega_{\Gamma_{E}}(e^{it})}\right|^{p}\omega_{\Gamma_{E}}(e^{it})\,\mathrm{d}t\\
+D_{\delta}^{1-p}\int_{H_{b}}\left|\frac{T_{n}\left(t\right)q'\left(t\right)}{\deg\left(T_{n}q\right)2\pi\omega_{\Gamma_{E}}(e^{it})}\right|^{p}\omega_{\Gamma_{E}}(e^{it})\,\mathrm{d}t\\
\le D_{\delta}^{1-p}\left(\frac{n}{\deg\left(T_{n}q\right)}\right)^{p}a\left(T_{n},H_{b}\right)A\left(T_{n},E\right)\\
+D_{\delta}^{1-p}\left(\frac{\deg\left(q\right)}{\deg\left(T_{n}q\right)}\right)^{p}b\left(T_{n},H_{b}\right)B\left(T_{n},E\right)\\
\le D_{\delta}^{1-p}a\left(T_{n},H_{b}\right)A\left(T_{n},E\right)+D_{\delta}^{1-p}3^{p}\left(\frac{n^{2\theta}}{n}\right)^{p}b\left(T_{n},H_{b}\right)B\left(T_{n},E\right).
\end{multline*}

Now we turn to $A_{\delta}\bigl(T_{n}q,E\left(\delta\right)\setminus(H\cup H_{b})\bigr)$.
We may assume that $||T_{n}||_{E}=1$. \eqref{ineq:2} gives that

\begin{multline*}
A_{\delta}\left(T_{n}q,E\left(\delta\right)\setminus\left(H\cup H_{b}\right)\right)\\
\le\int_{E\left(\delta\right)\setminus(H\cup H_{b})}\left|\frac{T_{n}'(t)q(t)}{\deg(T_{n}q)2\pi\omega_{\Gamma_{E\left(\delta\right)}}(e^{it})}\right|^{p}\omega_{\Gamma_{E\left(\delta\right)}}(e^{it})\,\mathrm{d}t\\
+\int_{E\left(\delta\right)\setminus(H\cup H_{b})}\left|\frac{T_{n}(t)q'(t)}{\deg(T_{n}q)2\pi\omega_{\Gamma_{E\left(\delta\right)}}(e^{it})}\right|^{p}\omega_{\Gamma_{E\left(\delta\right)}}(e^{it})\,\mathrm{d}t.
\end{multline*}
By \eqref{w/w}, \eqref{approx1} and Lukashov's inequality \eqref{Luk}
we can continue as

\begin{multline*}
\le D_{\delta}^{1-p}F_{n}^{p}\int_{E\setminus(H\cup H_{b})}\left|\frac{n2\pi\left\Vert T_{n}\right\Vert _{E\left(\delta\right)}\omega_{\Gamma_{E\left(\delta\right)}}(e^{it})}{\deg\left(T_{n}q\right)2\pi\omega_{\Gamma_{E\left(\delta\right)}}(e^{it})}\right|^{p}\omega_{\Gamma_{E\left(\delta\right)}}(e^{it})\,\mathrm{d}t\\
+D_{\delta}^{1-p}\int_{E\setminus(H\cup H_{b})}\left|\frac{T_{n}\left(t\right)\deg\left(q\right)2\pi\left\Vert q\right\Vert _{E\left(\delta\right)}\omega_{\Gamma_{E\left(\delta\right)}}(e^{it})}{\deg\left(T_{n}q\right)2\pi\omega_{\Gamma_{E\left(\delta\right)}}(e^{it})}\right|^{p}\omega_{\Gamma_{E\left(\delta\right)}}(e^{it})\,\mathrm{d}t\\
\le D_{\delta}^{1-p}F_{n}^{p}\left(\frac{n}{\deg\left(T_{n}q\right)}\right)^{p}\int_{E\setminus(H\cup H_{b})}1\,\omega_{\Gamma_{E\left(\delta\right)}}(e^{it})\,\mathrm{d}t\\
+D_{\delta}^{1-p}\left(\frac{\deg\left(q\right)}{\deg\left(T_{n}q\right)}\right)^{p}\int_{E\setminus(H\cup H_{b})}\left|T_{n}\left(t\right)\right|^{p}\omega_{\Gamma_{E\left(\delta\right)}}(e^{it})\,\mathrm{d}t\\
\le D_{\delta}^{1-p}F_{n}^{p}\left(\frac{n}{\deg\left(T_{n}q\right)}\right)^{p}+D_{\delta}^{1-p}\left(\frac{\deg\left(q\right)}{\deg\left(T_{n}q\right)}\right)^{p}B\left(T_{n},E\right).
\end{multline*}

The last expression is less than $o(1)B(T_{n},E),$ if we consider
that $F_{n}^{p/2}\le B(T_{n},E)$ where $n$ is large enough. This
is implied by Lemma \ref{also} as follows. Assume that $x_{0}\in E$
is a point for which $\left|T_{n}\left(x_{0}\right)\right|=\left\Vert T_{n}\right\Vert _{E}=1.$
Then $x_{0}$ is also an element of a component $\left[v_{2l-1},v_{2l}\right]$
for some $l.$ Now by Lemma \ref{also} we have
\[
\int_{v_{2l-1}}^{v_{2l}}\left|T_{n}(t)\right|^{p}\omega_{\Gamma_{E}}(t)\,\mathrm{d}t\ge\conxviii\frac{1}{2^{p}}\frac{1}{n^{2}}.
\]
If $n$ is large enough depending on $\conxviii$, $p$ (and $F_{n}$),
then we can write 
\begin{equation}
\conxviii\frac{1}{2^{p}}\frac{1}{n^{2}}\ge F_{n}^{p/2}=\conxvi^{p/2}\exp\left(-\coni n^{\theta}\cdot\frac{p}{2}\right).\label{eq:subpol}
\end{equation}
where the second inequality holds if $n$ is large enough. Using these,
we can continue the estimate
\[
D_{\delta}^{1-p}F_{n}^{p}\left(\frac{n}{\deg\left(T_{n}q\right)}\right)^{p}\le D_{\delta}^{1-p}F_{n}^{p/2}F_{n}^{p/2}\le D_{\delta}^{1-p}F_{n}^{p/2}B\left(T_{n},E\right)
\]
and 
\[
D_{\delta}^{1-p}\left(\frac{\deg\left(q\right)}{\deg\left(T_{n}q\right)}\right)^{p}B\left(T_{n},E\right)\le D_{\delta}^{1-p}3^{p}\left(\frac{n^{2\theta}}{n}\right)^{p}B\left(T_{n},E\right).
\]
Therefore
\[
A_{\delta}\left(T_{n}q,E\left(\delta\right)\setminus\left(H\cup H_{b}\right)\right)\le D_{\delta}^{1-p}\left(F_{n}^{p/2}+3^{p}\left(\frac{n^{2\theta}}{n}\right)^{p}\right)B\left(T_{n},E\right).
\]
Finally, summing up, we can write
\begin{multline*}
\left|A_{\delta}\left(T_{n}q,E\left(\delta\right)\right)-A_{\delta}\left(T_{n}q,H\right)\right|\le D_{\delta}^{1-p}a\left(T_{n},H_{b}\right)A\left(T_{n},E\right)\\
+\left(D_{\delta}^{1-p}F_{n}^{p/2}+D_{\delta}^{1-p}3^{p}\left(\frac{n^{2\theta}}{n}\right)^{p}+\left(\frac{n^{2\theta}}{n}\right)^{p}b\left(T_{n},H_{b}\right)\right)B\left(T_{n},E\right).
\end{multline*}

The proof of \eqref{eq:6} is similar to that of \eqref{eq:5} but
it is simpler. First
\[
\left|B_{\delta}\left(T_{n}q,E\left(\delta\right)\right)-B_{\delta}\left(T_{n}q,H\right)\right|=B_{\delta}\left(T_{n}q,H_{b}\right)+B_{\delta}\left(T_{n}q,E\left(\delta\right)\setminus\left(H\cup H_{b}\right)\right).
\]
Then
\begin{multline*}
B_{\delta}\left(T_{n}q,H_{b}\right)=\int_{H_{b}}\left|T_{n}\left(t\right)q\left(t\right)\right|^{p}\omega_{\Gamma_{E\left(\delta\right)}}\left(e^{it}\right)\,\mathrm{d}t\\
\le D_{\delta}\int_{H_{b}}\left|T_{n}\left(t\right)q\left(t\right)\right|^{p}\omega_{\Gamma_{E}}\left(e^{it}\right)\,\mathrm{d}t\\
\le D_{\delta}b\left(T_{n},H_{b}\right)B\left(T_{n},E\right)
\end{multline*}
and
\begin{multline*}
B_{\delta}\left(T_{n}q,E\left(\delta\right)\setminus\left(H\cup H_{b}\right)\right)=\int_{E\left(\delta\right)\setminus\left(H\cup H_{b}\right)}\left|T_{n}\left(t\right)q\left(t\right)\right|^{p}\omega_{\Gamma_{E\left(\delta\right)}}\left(e^{it}\right)\,\mathrm{d}t\\
\le D_{\delta}F_{n}^{p}B_{\delta}\left(T_{n},E\left(\delta\right)\setminus\left(H\cup H_{b}\right)\right)\le D_{\delta}F_{n}^{p}B\left(T_{n},E\right).
\end{multline*}
Finally, summing up, we can write 
\[
\left|B_{\delta}\left(T_{n}q,E\left(\delta\right)\right)-B_{\delta}\left(T_{n}q,H\right)\right|\le D_{\delta}\left(F_{n}^{p}+b\left(T_{n},H_{b}\right)\right)B\left(T_{n},E\right).
\]

\end{proof}

\subsection{Application of lemmas}

Here we apply the previous lemmas (see \cite{totikvarga}, pp. 401-416
and see that of \cite{nagytookos}) and investigate the occurring
error terms in detail.

\subsection{Reviewing the first and third cases\label{sub:first_case}}

First, we consider Subsection 5.3 from \cite{nagytookos} and Subsection
3.3 from \cite{totikvarga}. This setting is called first and third
cases. Recall that the set $H\subset E$ satisfies \eqref{eq:I},
\eqref{eq:II-a} and \eqref{eq:II-b}. We follow those steps, see
(30) in \cite{nagytookos} and (17) in \cite{totikvarga}. Therefore,
with Lemmas \ref{lem:7} and \ref{lem:8}, we can write
\begin{multline*}
\left(2N\right)A\left(T_{n},H\right)\le\left(2N\right)\left(\frac{\deg\left(T_{n}q\right)}{n}\right)^{p}B\left(T_{n},H\right)\\
+2N\left(4F_{n}^{p}+a\left(T_{n},H_{b}\right)\right)A\left(T_{n},E\right)\\
+2N\left(4\cdot3^{p}\left(\frac{n^{2\theta}}{n}\right)^{p}+3\left(\frac{n^{*}}{n}\right)^{p}F_{n}^{p}+\left(\frac{n^{*}}{n}\right)^{p}b\left(T_{n},H_{b}\right)\right)B\left(T_{n},E\right)
\end{multline*}
We can rewrite it with the help of 
\[
\left(\frac{\deg\left(T_{n}q\right)}{n}\right)^{p}-1=\left(1+\frac{\deg q}{n}\right)^{p}-1\le\left(1+\frac{3n^{2\theta}}{n}\right)^{p}-1\le p\frac{3n^{2\theta}}{n}
\]
where we used $0<p<1$.

Here, using Lemma \ref{lem:7}, \eqref{est:fp_becsles} and \eqref{eq:II-a},
the error term of $A\left(T_{n},E\right)$ can be estimated as follows
\[
4F_{n}^{p}+a\left(T_{n},H_{b}\right)\le\frac{4\conxix+1}{n^{\gamma}}
\]
and, for the error term of $B\left(T_{n},E\right)$, 
\begin{multline*}
4\cdot3^{p}\left(\frac{n^{2\theta}}{n}\right)^{p}+\left(\frac{n^{*}}{n}\right)^{p}F_{n}^{p}+\left(\frac{n^{*}}{n}\right)^{p}b\left(T_{n},H_{b}\right)+p\frac{3n^{2\theta}}{n}\\
\le4\cdot3^{p}\frac{1}{n^{\gamma}}+3\cdot2^{p}\conxix\frac{1}{n^{\gamma}}+2^{p}\frac{1}{n^{\gamma}}+3p\left(\frac{n^{2\theta}}{n}\right)^{p}\\
\le\left(4\cdot3^{p}+3\cdot2^{p}\conxix+2^{p}+3p\right)\frac{1}{n^{\gamma}}.
\end{multline*}
Therefore, with $\conxxv:=\max\left(4\conxix+1,\ 4\cdot3^{p}+3\cdot2^{p}\conxix+2^{p}+3p\right)$
we can write
\begin{equation}
A\left(T_{n},H\right)\le B\left(T_{n},H\right)+\frac{\conxxv}{n^{\gamma}}A\left(T_{n},E\right)+\frac{\conxxv}{n^{\gamma}}B\left(T_{n},E\right).\label{FIRST_CASE}
\end{equation}

\subsection{Reviewing the second case\label{sub:second_case}}

As for the second case (see Subsection 5.4 in \cite{nagytookos} and
Subsection 3.4 in \cite{totikvarga}), we have to track carefully
the error terms.

Consider an inner extremal $\zeta_{k,j}\in E$ where $k\in\left\{ 1,2,\ldots,m\right\} $
and $j\in\left\{ 1,\ldots,j_{k}\right\} $ are fixed. 

Let $H$ be an interval of second type containing $\zeta_{k,j}$.
Then, by property \eqref{eq:III}, $\left|H\right|\le4n^{\gamma-\kappa}$.
So if $\delta$ is fixed and $n$ is large enough, then $\left|\zeta_{k,j}-\zeta_{k,j}\left(\delta\right)\right|\ge\left|H\right|$.
Therefore $H$ contains no inner extremals of $U_{\delta}$ and it
behaves as an integral of first type with respect to $E\left(\delta\right)=U_{\delta}^{-1}\left[-1,1\right]$.

We need the following notations and observations. Let $\conxxvi:=\max\left(\conxix+3^{p},3+3^{p}\right)$.
Since $0<p\le1$, and $D_{\delta}\ge1$, we have $D_{\delta}^{1-p}\le D_{\delta}$.
We use \eqref{eq:subpol} and $0\le\gamma<1$, so if $n$ is large,
then 
\[
F_{n}^{p/2}\le n^{-\gamma}.
\]
The error terms in Lemma \ref{lem:11} have polynomial decay in the
following sense. We use \eqref{est:fp_becsles}, \eqref{eq:uj_felt}:
\begin{equation}
F_{n}^{p}+3^{p}\left(\frac{n^{2\theta}}{n}\right)^{p}\le\conxix n^{-\gamma}+3^{p}n^{-\gamma}\le\frac{\conxxvi}{n^{\gamma}},\label{eq:3hibatag1}
\end{equation}
\begin{equation}
3^{p}\left(\frac{n^{2\theta}}{n}\right)^{p}\le\frac{\conxxvi}{n^{\gamma}},\label{eq:3hibatag2}
\end{equation}
and we also use \eqref{eq:II-a}, \eqref{eq:II-b}, so
\begin{equation}
D_{\delta}^{1-p}a\left(T_{n},H_{b}\right)\le D_{\delta}^{1-p}n^{-\gamma}\le\frac{D_{\delta}}{n^{\gamma}},\label{eq:5hibatag1}
\end{equation}
\begin{multline}
D_{\delta}^{1-p}F_{n}^{p/2}+D_{\delta}^{1-p}3^{p}\left(\frac{n^{2\theta}}{n}\right)^{p}+\left(\frac{n^{2\theta}}{n}\right)^{p}b\left(T_{n},H_{b}\right)\le D_{\delta}^{1-p}\left(1+3^{p}\right)\frac{1}{n^{\gamma}}+\frac{1}{n^{\gamma}}\\
\le\frac{D_{\delta}\conxxvi}{n^{\gamma}}\label{eq:5hibatag2}
\end{multline}
and with \eqref{est:fp_becsles} 
\begin{equation}
D_{\delta}\left(F_{n}^{p}+b\left(T_{n},H_{b}\right)\right)\le\frac{D_{\delta}\conxxvi}{n^{\gamma}}.\label{eq:6hibatag}
\end{equation}

Furthermore, by Lemma \ref{sulykonv} and $0<p<1$, we have the following
estimate for $t\in H$ 
\begin{equation}
\omega_{\Gamma_{E}}^{1-p}\left(e^{it}\right)\le\omega_{\Gamma_{E\left(\delta\right)}}^{1-p}\left(e^{it}\right)\le D_{\delta}\omega_{\Gamma_{E}}^{1-p}\left(e^{it}\right).\label{est:omega_power}
\end{equation}

Now we start the estimate, as in pp. 147--149 in \cite{nagytookos}.
Using \eqref{eq:3} and \eqref{est:omega_power}, 
\begin{multline*}
A\left(T_{n},H\right)\le A\left(T_{n}q,H\right)+\frac{\conxxvi}{n^{\gamma}}A\left(T_{n},E\right)+\frac{\conxxvi}{n^{\gamma}}B\left(T_{n},E\right)\\
\le A_{\delta}\left(T_{n}q,H\right)+\frac{\conxxvi}{n^{\gamma}}A\left(T_{n},E\right)+\frac{\conxxvi}{n^{\gamma}}B\left(T_{n},E\right).
\end{multline*}

We apply the first case for the polynomial $T_{n}q$ on the interval
$H$ with respect to $E\left(\delta\right)$ (see \eqref{FIRST_CASE}),
and use that $\deg T_{n}q\ge n$, so
\begin{equation}
A_{\delta}\left(T_{n}q,H\right)\le B_{\delta}\left(T_{n}q,H\right)+\frac{\conxxv}{n^{\gamma}}A_{\delta}\left(T_{n}q,E\left(\delta\right)\right)+\frac{\conxxv}{n^{\gamma}}B_{\delta}\left(T_{n}q,E\left(\delta\right)\right)\label{eq:case_one_applied}
\end{equation}
where $\conxxv$ depends on $p$ and $\conxix$ only, $\conxix$ depends
on $\coni$ and $\conxvi$ (and $\gamma$, $\theta$) only, $\coni$
and $\conxvi$ depend on $\theta$ and $\kappa$ only. Therefore,
$\conxxv$ is independent of $\delta$. 

We estimate the error term containing $A_{\delta}\left(.,.\right)$
using \eqref{eq:5} (with the estimates \eqref{eq:5hibatag1}, \eqref{eq:5hibatag2}),
\eqref{est:omega_power} and \eqref{eq:3} (with the \eqref{eq:3hibatag1},
\eqref{eq:3hibatag2}) as follows
\begin{multline*}
A_{\delta}\left(T_{n}q,E\left(\delta\right)\right)\le A_{\delta}\left(T_{n}q,H\right)+\frac{D_{\delta}}{n^{\gamma}}A\left(T_{n},E\right)+\frac{D_{\delta}\conxxvi}{n^{\gamma}}B\left(T_{n},E\right)\\
\le D_{\delta}A\left(T_{n}q,H\right)+\frac{D_{\delta}}{n^{\gamma}}A\left(T_{n},E\right)+\frac{D_{\delta}\conxxvi}{n^{\gamma}}B\left(T_{n},E\right)\\
\le D_{\delta}A\left(T_{n},H\right)+\left(\frac{D_{\delta}}{n^{\gamma}}+\frac{D_{\delta}\conxxvi}{n^{\gamma}}\right)A\left(T_{n},E\right)+\frac{2D_{\delta}\conxxvi}{n^{\gamma}}B\left(T_{n},E\right)\\
\le D_{\delta}\left(1+\frac{\conxxvi+1}{n^{\gamma}}\right)A\left(T_{n},E\right)+\frac{2D_{\delta}\conxxvi}{n^{\gamma}}B\left(T_{n},E\right)
\end{multline*}

We estimate the error term with $B_{\delta}\left(.,.\right)$ with
\eqref{eq:6} (and \eqref{eq:6hibatag}) and \eqref{eq:4} as follows
\begin{multline*}
B_{\delta}\left(T_{n}q,E\left(\delta\right)\right)\le B_{\delta}\left(T_{n}q,H\right)+\frac{D_{\delta}\conxxvi}{n^{\gamma}}B\left(T_{n},E\right)\\
\le D_{\delta}B\left(T_{n}q,H\right)+\frac{D_{\delta}\conxxvi}{n^{\gamma}}B\left(T_{n},E\right)\le D_{\delta}B\left(T_{n},H\right)+\frac{D_{\delta}\conxxvi}{n^{\gamma}}B\left(T_{n},E\right)\\
\le D_{\delta}\left(1+\frac{\conxxvi}{n^{\gamma}}\right)B\left(T_{n},E\right).
\end{multline*}

We estimate the $B_{\delta}$ on the right hand side of \eqref{eq:case_one_applied}
as follows
\[
B_{\delta}\left(T_{n}q,H\right)\le D_{\delta}B\left(T_{n}q,H\right)\le D_{\delta}B\left(T_{n},H\right).
\]
Collecting these estimates together, we can write
\begin{multline*}
A\left(T_{n},H\right)\le D_{\delta}B\left(T_{n},H\right)\\
+A\left(T_{n},E\right)\left(\frac{\conxxv}{n^{\gamma}}D_{\delta}\left(1+\frac{1+\conxxvi}{n^{\gamma}}\right)+\frac{\conxxvi}{n^{\gamma}}\right)\\
+B\left(T_{n},E\right)\left(\frac{\conxxvi}{n^{\gamma}}+\frac{2\conxxv\conxxvi D_{\delta}}{n^{2\gamma}}+\frac{\conxxv}{n^{\gamma}}D_{\delta}\left(1+\frac{1+\conxxvi}{n^{\gamma}}\right)\right).
\end{multline*}
Here, we estimate the coefficients of the error terms as follows with
$\conxxvii:=\max\left(\conxxv\left(2+\conxxvi\right)+\conxxvi,\conxxvi+3\conxxv\conxxvi+2\conxxv,\conxxv\right)$
\begin{multline*}
\frac{\conxxv}{n^{\gamma}}D_{\delta}\left(1+\frac{1+\conxxvi}{n^{\gamma}}\right)+\frac{\conxxvi}{n^{\gamma}}\le\frac{\conxxv}{n^{\gamma}}D_{\delta}\left(1+1+\conxxvi\right)+\frac{\conxxvi}{n^{\gamma}}\\
\le\frac{D_{\delta}}{n^{\gamma}}\left(\conxxv\left(2+\conxxvi\right)+\conxxvi\right)\le\frac{D_{\delta}\conxxvii}{n^{\gamma}}
\end{multline*}
and
\begin{multline*}
\frac{\conxxvi}{n^{\gamma}}+\frac{2\conxxv\conxxvi D_{\delta}}{n^{2\gamma}}+\frac{\conxxv}{n^{\gamma}}D_{\delta}\left(1+\frac{1+\conxxvi}{n^{\gamma}}\right)\le\frac{\conxxvi}{n^{\gamma}}+\frac{2\conxxv\conxxvi D_{\delta}}{n^{\gamma}}\\
+\frac{\conxxv}{n^{\gamma}}D_{\delta}\left(1+1+\conxxvi\right)\le\frac{D_{\delta}}{n^{\gamma}}\left(\conxxvi+2\conxxv\conxxvi+\conxxv\left(2+\conxxvi\right)\right)\le\frac{D_{\delta}\conxxvii}{n^{\gamma}}.
\end{multline*}
Therefore, we have the following estimate
\begin{equation}
A\left(T_{n},H\right)\le D_{\delta}B\left(T_{n},H\right)+\frac{D_{\delta}\conxxvii}{n^{\gamma}}A\left(T_{n},E\right)+\frac{D_{\delta}\conxxvii}{n^{\gamma}}B\left(T_{n},E\right).\label{SECOND_CASE}
\end{equation}
Note that $ $$\conxxvii$ depends on $\conxxv$ and $\conxxvi$ only,
and these two constants are independent of $\delta$.

\subsection{Proving Theorem \ref{thm:arestov} for T-sets, then for union of
finitely many intervals and Theorem \ref{thm:sharpness}}

In this section we finish the proof of Theorem \ref{thm:arestov},
following essentially Sections 5.5 and 6 in \cite{nagytookos}. With
Sections \ref{sub:first_case} and \ref{sub:second_case} at hand,
we sum up the results for those intervals, copying the steps in Section
5.5 in \cite{nagytookos}. This way, using $D_{\delta}\ge1$ and $\conxxvii\ge\conxxv$,
we can write 
\[
A\left(T_{n},E\right)\le D_{\delta}B\left(T_{n},E\right)+12N\frac{C_{29}D_{\delta}}{n^{\gamma}}A\left(T_{n},E\right)+12N\frac{C_{29}D_{\delta}}{n^{\gamma}}B\left(T_{n},E\right).
\]
Hence, Theorem \ref{thm:arestov} is proved for T-sets. 

For sets consisting of union of finitely many intervals, Section 6
in \cite{nagytookos} can be applied mutatis mutandis.

As regards Theorem \ref{thm:sharpness}, the Section 7 in \cite{nagytookos}
with the simple cosine substitution gives the proof, since the authors
did not use there that the power is bigger than $1$.

\section*{Acknowledgement}

Béla Nagy was supported by Magyary scholarship: This research was
realized in the frames of TÁMOP 4.2.4. A/2-11-1-2012-0001 „National
Excellence Program – Elaborating and operating an inland student and
researcher personal support system.” The project was subsidized by
the European Union and co-financed by the European Social Fund.

The second author, Tamás Varga was supported by the European Research
Council Advanced grant No. 267055, while he had a position at the
Bolyai Institute, University of Szeged, Aradi v. tere 1, Szeged 6720,
Hungary.


\providecommand{\bysame}{\leavevmode\hbox to3em{\hrulefill}\thinspace}
\providecommand{\MR}{\relax\ifhmode\unskip\space\fi MR }
\providecommand{\MRhref}[2]{%
  \href{http://www.ams.org/mathscinet-getitem?mr=#1}{#2}
}
\providecommand{\href}[2]{#2}

\medskip{}

Béla Nagy,

MTA-SZTE Analysis and Stochastics Research Group, Bolyai Institute,
Szeged, H-6720, Aradi vértanúk tere 1, Hungary,

email: \href{mailto:nbela@math.u-szeged.hu}{nbela@math.u-szeged.hu}

\smallskip{}

Tamás Varga,

Bolyai Institute, University of Szeged, Szeged, H-6720, Aradi vértanúk
tere 1, Hungary,

email: \href{mailto:vargata@math.u-szeged.hu}{vargata@math.u-szeged.hu}
\end{document}